\documentclass[12pt]{article}
\usepackage{eucal}
\usepackage[utf8]{inputenc}
\usepackage[english]{babel}
\usepackage{amsfonts,amsmath,amssymb,amsthm}
\usepackage[all]{xy}
\newtheorem{trm}{Theorem}[section]
\newtheorem*{trm*}{Theorem}
\newtheorem{dfz}[trm]{Definition}
\newtheorem{prop}[trm]{Proposition}
\newtheorem{lmm}[trm]{Lemma}
\newtheorem{cor}[trm]{Corollary}
\newcommand{\rmk}{\noindent\textbf{Remark. }}
\newcommand{\akn}{\vspace{15pt}\noindent\textbf{Acknowledgments }}
\usepackage{mathrsfs}
\newcommand{\B}{\mathcal{B}}
\newcommand{\hu}[1]{\hat{H}^{-1}(G_{n,m},#1)}
\newcommand{\hU}[1]{\hat{H}^{1}(G_{n,m},#1)}
\newcommand{\hz}[1]{\hat{H}^0(G_{n,m},#1)}
\newcommand{\hq}[1]{\hat{H}^{q}(G_{n,m},#1)}
\newcommand{\huG}[1]{\hat{H}^{-1}(G,#1)}
\newcommand{\hUG}[1]{\hat{H}^{1}(G,#1)}
\newcommand{\hzG}[1]{\hat{H}^0(G,#1)}
\newcommand{\hqG}[1]{\hat{H}^{q}(G,#1)}
\numberwithin{equation}{section}
\begin{document}
\title{Cyclotomic Units and Class Groups in $\mathbb{Z}_p$-extensions of real abelian fields}
\author{Filippo A. E. Nuccio\footnote{Universit\`a ``La Sapienza'' - ROME, \texttt{nuccio@mat.uniroma1.it}}}
\date{}
\maketitle
\abstract{For a real abelian number field $F$ and for a prime $p$ we study the relation between the $p$-parts of the class groups and of the quotients of global units modulo cyclotomic units along the cyclotomic $\mathbb{Z}_p$-extension of $F$. Assuming Greenberg's conjecture about the vanishing of the $\lambda$-invariant of the extension, a map between these groups has been constructed by several authors, and shown to be an isomorphism if $p$ does not split in $F$. We focus in the split case, showing that there are, in general, non-trivial kernels and cokernels.\\\\\small{2000 Mathematical Subject Classification: $11R23$, $11R29$}}
%%%%%%%%%%%%%%%SEZIONE 1-INTRODUZIONE%%%%%%%%%%%%%%%%%%%%%%%%%%%%%%
\section{Introduction}
Let $F/\mathbb{Q}$ be a real abelian field of conductor $f$ and let $Cl_F$ be its ideal class group. A beautiful formula for the order of this class group comes from the group of Cyclotomic Units: this is a subgroup of the global units $\mathcal{O}_F^\times$ whose index is linked to the order of $Cl_F$. To be precise, we give the following definition (\cite{Sin80/81}, section 4):
\begin{dfz} For integers $n>1$ and $a$ not divisible by $n$, let $\zeta_n$ be a primitive $n$-th root of unity. Then $Norm^{\mathbb{Q}(\zeta_n)}_{F\cap \mathbb{Q}(\zeta_n)}(1-\zeta_n^a)\in F$ and we define the cyclotomic numbers $D_F$ to be the subgroup of $F^\times$ generated by $-1$ and $Norm^{\mathbb{Q}(\zeta_n)}_{F\cap \mathbb{Q}(\zeta_n)}(1-\zeta_n^a)$ for all $n>1$ and all $a$ not divisible by $n$. Then we define the Cyclotomic Units of $F$ to be
$$
Cyc_F:=D_F\cap \mathcal{O}_F^\times
$$
\end{dfz}

Sinnott proved in \cite{Sin80/81}, Theorem 4.1 together with Proposition 5.1, the following theorem:
\begin{trm*}[Sinnott] There exists an explicit constant $\kappa_F$ divisible only by $2$ and by primes dividing $[F:\mathbb{Q}]$ such that
$$
[\mathcal{O}_F^\times:Cyc_F]=\kappa_F|Cl_F|\;.
$$
\end{trm*}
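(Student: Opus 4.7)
My plan is to derive the formula from the analytic class number formula, which expresses $|Cl_F|$ in terms of the regulator $R_F$, the discriminant, and a product of special $L$-values. Since $F/\mathbb{Q}$ is abelian, the Dedekind zeta function factors as $\zeta_F(s) = \prod_\chi L(s,\chi)$ with $\chi$ ranging over the characters of $G = \mathrm{Gal}(F/\mathbb{Q})$, and each $L(1,\chi)$ for a non-trivial even $\chi$ admits the closed form
$$
L(1,\chi) = -\frac{1}{f_\chi}\sum_{a=1}^{f_\chi-1}\chi(a)\log|1-\zeta_{f_\chi}^a|.
$$
These logarithms are precisely the ones appearing in the cyclotomic numbers $D_F$, so combining the factorization with the class number formula already suggests that $|Cl_F|$ equals, up to controllable factors, the index of $Cyc_F$ in $\mathcal{O}_F^\times$.

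The precise strategy I would follow is Sinnott's: introduce an auxiliary $\mathbb{Z}[G]$-module $U$ generated inside $\mathbb{R}[G]$ by the ``logarithm vectors'' $\bigl(\log|\sigma(1-\zeta_{f_\chi}^a)|\bigr)_{\sigma\in G}$ attached to the norms of cyclotomic numbers. The logarithmic embedding sends $Cyc_F$ into a full-rank sublattice of the trace-zero hyperplane, and one can compute its covolume character-by-character via the Frobenius determinant relation. The resulting expression matches $\prod_{\chi\neq 1}L(1,\chi)$ up to normalizing factors that depend only on conductors and on the $2$-part of $G$. Dividing by the covolume of $\log(\mathcal{O}_F^\times)$, which is $R_F$, and invoking the class number formula, one finds $|Cl_F|$ on one side and $[\mathcal{O}_F^\times : Cyc_F]$ on the other, differing only by the claimed constant.

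The main obstacle I anticipate is controlling the discrepancy between the several $\mathbb{Z}[G]$-modules in play. The module $D_F$ contains all norms of $1-\zeta_n^a$, which are in general only $S$-units, so passing to $Cyc_F = D_F\cap \mathcal{O}_F^\times$ loses information that must be tracked prime-by-prime. Moreover, the lattice that arises naturally from the character computation is not exactly $\log(Cyc_F)$ but differs from it by a submodule whose index is supported only on primes dividing $[F:\mathbb{Q}]$ (together with $2$). Proving this last assertion --- that all the errors are concentrated at precisely these primes --- is the real heart of the argument, and requires a delicate analysis of the kernels and cokernels of the natural norm and inclusion maps between the relevant group-ring modules. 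Once this module-theoretic input is in place, the constant $\kappa_F$ emerges as an explicit product over the ``bad'' primes, which completes the proof.
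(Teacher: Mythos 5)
The paper does not prove this theorem; it is quoted verbatim from Sinnott's paper \cite{Sin80/81} (Theorem 4.1 combined with Proposition 5.1) and used as a black box, so there is no internal proof to compare against. Your sketch is a reasonable reconstruction of Sinnott's original strategy: factor $\zeta_F$ into Dirichlet $L$-functions, evaluate $L(1,\chi)$ by the logarithmic formula, compare the covolume of $\log(Cyc_F)$ with the regulator $R_F$ via a character-by-character (Frobenius determinant) computation, and feed the result into the analytic class number formula. The auxiliary module $U\subset\mathbb{R}[G]$ you introduce is indeed the device Sinnott uses, and your identification of the ``discrepancy'' step as the crux is accurate.

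That said, as a proof your proposal has a genuine gap exactly at that crux. The entire content of the constant $\kappa_F$ --- and the nontrivial claim that it is supported only on $2$ and on primes dividing $[F:\mathbb{Q}]$ --- lives in Sinnott's index computation for $\mathbb{Z}[G]$-lattices, specifically his comparison of $U$ with $\mathbb{Z}[G]$ and the associated cohomological/$\ell$-adic index formula. You assert that ``all the errors are concentrated at precisely these primes'' and call it the heart of the argument, but you give no mechanism for proving it: there is no mention of Sinnott's lattice-index lemma, of the decomposition of $U$ according to the subgroups $\mathrm{Gal}(F/F\cap\mathbb{Q}(\zeta_{p^\infty}))$, or of why the relevant Tate cohomology groups are annihilated by $2|G|$. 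Likewise, the passage from $D_F$ (which contains non-units) to $Cyc_F=D_F\cap\mathcal{O}_F^\times$ requires tracking an explicit $S$-unit exact sequence whose cokernel contributes to $\kappa_F$; you flag this but do not resolve it. In short, the high-level plan is correct and is the right one, but the module-theoretic input that actually produces and bounds $\kappa_F$ is stated as a goal rather than proved, so the argument as written does not establish the theorem.
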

Let now $p$ be an odd prime that does not divide $[F:\mathbb{Q}]$: by tensoring $\mathcal{O}_F^\times$, $Cyc_F$ and $Cl_F$ with $\mathbb{Z}_p$ we get an equality
$$
[\mathcal{O}_F^\times\otimes\mathbb{Z}_p:Cyc_F\otimes\mathbb{Z}_p]=|Cl_F\otimes\mathbb{Z}_p|
$$
and it is natural to ask for an algebraic interpretation of this. Moreover, observe that our assumption $p\nmid [F:\mathbb{Q}]$ makes the Galois group $\Delta:=\mathrm{Gal}(F/\mathbb{Q})$ act on the modules appearing above through one-dimensional characters, and we can decompose them accordingly: in the sequel we write $M(\chi)$ for every $\mathbb{Z}[\Delta]$-module $M$ to mean the submodule of $M\otimes \mathbb{Z}_p$ of $M$ on which $\Delta$ acts as $\chi$, where $\chi\in\hat{\Delta}$ (see the beginning of Section \ref{SecCycUnits} for a precise discussion). Then an even more optimistic question is to hope for a character-by-character version of Sinnott's theorem, namely
\begin{equation}\label{charbychar}
[\mathcal{O}_F^\times\otimes\mathbb{Z}_p(\chi):Cyc_F\otimes\mathbb{Z}_p(\chi)]\stackrel{?}{=}|Cl_F\otimes\mathbb{Z}_p(\chi)|\;
\end{equation}
and then ask for an algebraic interpretation of this. Although it is easy to see that these $\Delta$-modules are in general not isomorphic (see the example on page 143 of \cite{KraSch95}), it can be shown that they sit in an exact sequence for a wide class of fields arising in classical Iwasawa theory. More precisely, let $F_\infty/F$ be the cyclotomic $\mathbb{Z}_p$-extension of $F$ and let $\Gamma=\mathrm{Gal}(F_\infty/F)\cong\mathbb{Z}_p$: then
$$
F_\infty=\bigcup_{n\geq 0}F_n\supset\hdots\supset F_n\supset F_{n-1}\supset\hdots\supset F_0=F
$$
where $F_n/F$ is a cyclic extension of degree $p^n$ whose Galois group is isomorphic to $\Gamma/\Gamma^{p^n}$. In a celebrated work (see \cite{Iwa73}) Iwasawa gives a formula for the growth of the order of $Cl_{F_n}\otimes\mathbb{Z}_p$: he proves that there are three integers $\mu,\lambda$ and $\nu$, and an index $n_0\geq 0$, such that
$$
|Cl_{F_n}\otimes\mathbb{Z}_p|=p^{\mu p^n+\lambda n+\nu}\; \mathrm{\;for\;every\;}n\geq n_0\;.
$$
Moreover, Ferrero and Washington proved in \cite{FerWas79} that the invariant $\mu$ vanishes. A long-standing conjecture by Greenberg (see \cite{Gre76}, where conditions for this vanishing are studied) predicts that $\lambda=0$: according to the conjecture the $p$-part of the class groups should stay bounded in the tower.

Although a proof of this conjecture has never been provided, many computational checks have been performed verifying the conjecture in many cases (see, for instance, \cite{KraSch95}). Under the assumptions $\lambda=0$ and $\chi(p)\ne 1$, \emph{i. e.} $p$ does not split in $F$, some authors (see \cite{BelNQD01}, \cite{KraSch95}, \cite{Kuz96} and \cite{Oza97}) were able to construct an explicit isomorphism
\begin{equation}\label{millemila}
\alpha:\big(Cl_{F_n}\otimes\mathbb{Z}_p\big)(\chi)\cong\big(\mathcal{O}_{F_n}^\times/Cyc_{F_n}\otimes\mathbb{Z}_p\big)(\chi)
\end{equation}
if $n$ is big enough. Although the construction of the above morphism works also in the case $\chi(p)=1$, as detailed in the beginning of Section \ref{Main Result}, the split case seems to have never been addressed. We focus then on this case, and study the map in this contest, still calling it $\alpha$. Our main result is the following (see Corollary \ref{Renefruit})
\begin{trm*} With notations as above, assume that $\chi$ is a character of $\Delta$ such that $\chi(p)=1$ and that $\lambda=0$. Then, for sufficiently big $n$, there is an exact sequence	
$$
0\longrightarrow \mathrm{K} \rightarrow \big(Cl_{F_n}\otimes\mathbb{Z}_p\big)(\chi)\stackrel{\alpha}{\rightarrow}\big(\mathcal{O}_{F_n}^\times/Cyc_{F_n}\otimes\mathbb{Z}_p\big)(\chi)\rightarrow \mathrm{C}\rightarrow 0\;:
$$
both the kernel $\mathrm{K}$ and the cokernel $\mathrm{C}$ of $\alpha$ are cyclic groups with trivial $\Gamma$-action of order $|L_p(1,\chi)|_p^{-1}$ where $L_p(s,\chi)$ is the Kubota-Leopoldt $p$-adic $L$-function.
\end{trm*}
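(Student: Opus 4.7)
The strategy is to prove the statement by passing to the cyclotomic limit, invoking the Iwasawa Main Conjecture there, and then descending back to finite level via Tate cohomology of the $\Gamma$-modules involved. Set $X(\chi):=\varprojlim_n\bigl(Cl_{F_n}\otimes\mathbb{Z}_p\bigr)(\chi)$ and $Y(\chi):=\varprojlim_n\bigl(\mathcal{O}_{F_n}^\times/Cyc_{F_n}\otimes\mathbb{Z}_p\bigr)(\chi)$, the limits being taken with respect to the norm maps; both are finitely generated $\Lambda$-modules for $\Lambda=\mathbb{Z}_p[[\Gamma]]$. The Ferrero--Washington theorem gives $\mu=0$, and Greenberg's conjecture gives $\lambda=0$, so both $X(\chi)$ and $Y(\chi)$ are finite; by the Mazur--Wiles theorem (or by taking the limit of Sinnott's formula component by component) they have the same order.

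The maps $\alpha_n$ constructed at finite level in the previous literature are compatible with norms, so they induce a $\Lambda$-linear map $\alpha_\infty\colon X(\chi)\to Y(\chi)$. Since $\alpha_\infty$ is a map between finite abelian groups of the same order, it is enough to compute its kernel, and then the cokernel will have the same order automatically. I expect, following the methods of \cite{Kuz96} and \cite{BelNQD01}, that $\alpha_\infty$ is actually an isomorphism: the split case does not affect the infinite-level statement because Greenberg's conjecture absorbs the local behaviour at $p$ into the finiteness of $X(\chi)$.

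The second step is the descent from $F_\infty$ to $F_n$ via Tate cohomology. For $n\gg 0$, the modules $X(\chi)$ and $Y(\chi)$ are cofinal with their Galois coinvariants: applying the snake lemma to the diagram comparing $\alpha_\infty$ (taken mod $\Gamma_n=\Gamma^{p^n}$) with $\alpha_n$ relates $\mathrm{K}$ and $\mathrm{C}$ to the Tate cohomology groups $\hat{H}^0\bigl(\Gamma/\Gamma_n,X(\chi)\bigr)$ and $\hat{H}^{-1}\bigl(\Gamma/\Gamma_n,Y(\chi)\bigr)$. Since $\Gamma$ is procyclic and acts on finite modules, these Tate groups are cyclic; their orders stabilize for $n$ large and coincide by the vanishing of the Herbrand quotient on finite modules, producing cyclic $\mathrm{K}$ and $\mathrm{C}$ of the same order and trivial $\Gamma$-action, as required.

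The remaining and hardest task is to identify this common order with $|L_p(1,\chi)|_p^{-1}$. Here one uses the exact sequence coming from the semi-local units at $p$: in the split case, the decomposition group of $p$ in $\Delta$ acts trivially on the $\chi$-component, so each prime above $p$ contributes independently and the norm map from the $n$-th layer collapses nontrivially. Comparing global and semi-local units in the $\chi$-component produces a finite cyclic ``defect'' which, by the explicit version of Leopoldt's $p$-adic analytic class number formula, evaluates precisely to $L_p(1,\chi)$ up to a $p$-adic unit. The main obstacle is this last step: one must pin down, via an explicit reciprocity law or a Coleman-map computation, that the local contribution at the split prime matches $L_p(1,\chi)$ on the nose and not merely up to an unidentified unit factor; the trivial-zero phenomenon that the character $\chi$ would produce at $s=0$ must be carefully distinguished from the behaviour at $s=1$, which is where the statement is phrased.
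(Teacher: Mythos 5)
Your proposal contains a genuine gap at the heart of the argument, and it is worth being precise about where it breaks down. The paper never passes to an infinite-level map $\alpha_\infty$ and then descends; the maps $\alpha_{[0,1]}$ and $\alpha_{[1,0]}$ are defined at each finite level as connecting maps in an exact hexagon of Tate cohomology groups coming from the id\`ele-theoretic diagram (\ref{magic square}). Your proposed descent would identify $\mathrm{K}$ and $\mathrm{C}$ with $\hat H^0(\Gamma/\Gamma_n,X(\chi))$ and $\hat H^{-1}(\Gamma/\Gamma_n,Y(\chi))$, but this gives the wrong answer: once $n\gg m\gg 0$, the action of $\Gamma/\Gamma_n$ on the finite modules $X(\chi)$ and $Y(\chi)$ is trivial and the $G_{n,m}$-norm annihilates them, so (as in (\ref{n>>m})) these Tate groups are the \emph{whole} groups $X(\chi)\cong A_n$ and $Y(\chi)\cong B_n$, of order $|A_n|=|B_n|$, generically much larger than $|L_p(1,\chi)|_p^{-1}$. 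Relatedly, if $\alpha_\infty$ were a $\Gamma$-equivariant isomorphism of finite modules and $\alpha_n$ were its reduction mod $\Gamma_n$, then $\alpha_n$ would itself be an isomorphism for $n\gg 0$, contradicting the very statement you are trying to prove; so either $\alpha_\infty$ is not an isomorphism, or $\alpha_n$ is not its coinvariant reduction, and you have not clarified which.

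What the paper actually shows is that the kernel is the subgroup $\Pi_n\subseteq A_n$ generated by primes above $p$ (via Lemma \ref{primi}, Proposition \ref{ker-capit} on the capitulation kernel, and the diagram (\ref{ker})), and that $\Pi_n=X^\Gamma$ by Greenberg's criterion (\cite{Gre76}, Theorem 2). The identification of $|X^\Gamma|$ with $|L_p(1,\chi)|_p^{-1}$ then uses the Main Conjecture together with the observation that, because $p$ splits completely, the maximal pro-$p$ abelian extensions of $F_\infty$ unramified everywhere and unramified outside $p$ coincide (Proposition \ref{ordine fissi}). The order and cyclicity of the cokernel is read off from Gillard's explicit $\Lambda$-module description of $U_n/Cyc_n$ (Theorem 2 of \cite{Gil79-1}), which you gesture at under the heading of ``Coleman-map computation'' but do not carry out; that part of your plan is reasonable, but without the identification of the kernel as $\Pi_n=X^\Gamma$ the argument does not go through.
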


\akn This work is part of my PhD thesis, written under the supervision of René Schoof. I would like to take this opportunity to thank him not only for proposing me to work on this subject and for the help he gave me in writing this paper, but especially for all the time and patience he put in following me through my PhD and for the viewpoint on Mathematics he suggested me.
%%%%%%%%%%%%%%%SEZIONE 2%%%%%%%%%%%%%%%%%%%%%%%%%%%%%%
\section{Some Tate Cohomology}\label{SecTate}
In this section we briefly recall some well-known facts that are useful in the sequel. Throughout, $L/K$ is a cyclic extension of number fields, whose Galois group we denote by $G$. In our application, $K$ and $L$ will usually be layers $F_m$ and $F_n$ of the cyclotomic $\mathbb{Z}_p$-extension for some $n\geq m$, but we prefer here not to restrict to this special case.

We need to introduce some notation. Let
$$
\mathbb{U}_K=\prod_{v\nmid \infty}\mathcal{O}_{K,v}^\times\times\prod_{v\mid \infty}K_v^\times
$$
be the id\`ele units, \emph{i. e.} id\`eles having valuation $0$ at all finite place $v$ (we refer the reader to sections $14-19$ of Cassels' paper in \cite{CasFro86} for basic properties of id\`eles and id\`eles class group) and let $\Sigma$ be the set of places of $K$ that ramify in $L/K$. It is known (see section $1.4$ of Serre's paper in \cite{CasFro86}) that the Tate cohomology of local units in an unramified extension of local fields is trivial: therefore Shapiro's Lemma (see Proposition 7.2 of Tate's paper in \cite{CasFro86}, ) gives
$$
\hqG{\mathbb{U}_L}=\hqG{\prod_{v\in\Sigma}\prod_{w\mid v}\mathcal{O}_{L,w}^\times}\cong\prod_{v\in\Sigma}\hat{H}^q(G_v,\mathcal{O}_{L,w}^\times),
$$
where we fix a choice of a place $w$ of $L$ above $v$ for every $v\in\Sigma$ and we denote by $G_v$ its decomposition group in $G$; we will make this identification throughout. We denote the product of local units at places $v\in\Sigma$ appearing above by $U_\Sigma$. Consider the following commutative diagram of $G$-modules:
\begin{equation}\label{magic square}
\xymatrix{&0\ar@{>}[1,0]&0\ar@{>}[1,0]&0\ar@{>}[1,0]&\\
0\ar@{>}[0,1]&\mathcal{O}_L^\times\ar@{>}[1,0]\ar@{>}[0,1] &L^\times\ar@{>}[1,0]\ar@{>}[0,1]&\mathrm{Pr}_L\ar@{>}[1,0]\ar@{>}[0,1]&0\\
0\ar@{>}[0,1]&\mathbb{U}_L\ar@{>}[1,0]\ar@{>}[0,1]& \mathbb{A}_L^\times \ar@{>}[1,0]\ar@{>}[0,1]& \mathrm{Id}_L\ar@{>}[1,0]\ar@{>}[0,1]&0 \\
0\ar@{>}[0,1]&Q_L\ar@{>}[1,0]\ar@{>}[0,1] &C_L\ar@{>}[1,0]\ar@{>}[0,1]& Cl_L\ar@{>}[1,0]\ar@{>}[0,1]&0\\
&0&0&0}
\end{equation}
Here $\mathrm{Id}_L$ and $\mathrm{Pr}_L$ denote the group of all fractional ideals of $L$ and of principal ideals, respectively; while $C_L$ is the group of id\`ele classes and $Q_L=\mathbb{U}_L/\mathcal{O}_L^\times$.

\begin{lmm}\label{primi} Consider the following diagram induced by (\ref{magic square}):
\begin{equation*}\label{lemma primi}
\xymatrix{
&\huG{\mathbb{U}_L}\ar@{>}[1,0]^\beta\\
\hzG{Cl_L}\ar@{>}[0,1]^\pi&\huG{Q_L}\;.}
\end{equation*}
Then $\mathrm{Im}(\beta)=\pi(\overline{\Sigma^G})$ where $\Sigma^G$ are the primes in $L$ above $\Sigma$ fixed by $G$ and $\overline{\Sigma^G}$ is their image in $\hzG{Cl_L}$.
\end{lmm}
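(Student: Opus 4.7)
The plan is to compare $\beta$ and $\pi$ through the cohomology of the ideal group $\mathrm{Id}_L$ in the middle row of the magic square, using Hilbert~90 for idèles. First, Shapiro's lemma applied place by place, together with the local Hilbert~90 vanishing $\hat{H}^{-1}(G_v,L_w^\times)=0$, yields $\hat{H}^{-1}(G,\mathbb{A}_L^\times)=0$. The long exact sequence of the middle row therefore produces a surjective connecting map
\begin{equation*}
\delta_{\mathrm{mid}}\colon\hat{H}^{-2}(G,\mathrm{Id}_L)\twoheadrightarrow\hat{H}^{-1}(G,\mathbb{U}_L)
\end{equation*}
admitting the explicit description: the class of a $G$-invariant ideal $\mathfrak{A}$, lifted to an idèle $a\in\mathbb{A}_L^\times$, is sent to the class of $\sigma a/a\in\mathbb{U}_L$ (with $\sigma$ a fixed generator of $G$). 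A completely parallel description holds for the connecting map $\delta_{\mathrm{bot}}\colon\hat{H}^{-2}(G,Cl_L)\to\hat{H}^{-1}(G,Q_L)$ of the bottom row: the class of $[\mathfrak{A}]\in Cl_L^G$, lifted to the idèle class $[a]\in C_L$, is sent to the class of $\sigma a/a$ modulo $\mathcal{O}_L^\times$.

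The two parallel formulas make the commutativity of the square
\begin{equation*}
\xymatrix{
\hat{H}^{-2}(G,\mathrm{Id}_L)\ar[r]^{\delta_{\mathrm{mid}}}\ar[d] & \hat{H}^{-1}(G,\mathbb{U}_L)\ar[d]^{\beta}\\
\hat{H}^{-2}(G,Cl_L)\ar[r]^{\delta_{\mathrm{bot}}} & \hat{H}^{-1}(G,Q_L)
}
\end{equation*}
tautological, where the left vertical is induced by $\mathrm{Id}_L\twoheadrightarrow Cl_L$. For the cyclic group $G$ the Tate $2$-periodicity identifies $\hat{H}^{-2}(G,Cl_L)$ with $\hat{H}^0(G,Cl_L)=Cl_L^G/N_G Cl_L$, and under this identification $\delta_{\mathrm{bot}}$ becomes precisely the map $\pi$ of the statement. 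Since $\delta_{\mathrm{mid}}$ is surjective, $\mathrm{Im}(\beta)$ equals the $\pi$-image of the subgroup of $\hat{H}^0(G,Cl_L)$ generated by the classes $[\prod_{w|v}\mathfrak{P}_w]$ with $v$ running through the ramified places.

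To identify this subgroup with $\overline{\Sigma^G}$, I would observe that for $v\notin\Sigma$ the Shapiro factor $\hat{H}^{-1}(G_v,\mathcal{O}_{L,w}^\times)$ vanishes, so every class in $\hat{H}^{-1}(\mathbb{U}_L)$ can be represented by a cocycle $u=\sigma a/a$ after adjusting the idèle lift $a$ by an element of $\mathbb{A}_K^\times=(\mathbb{A}_L^\times)^G$ (which does not alter $u$) to be a local unit at every place outside $\Sigma$. As $u$ is itself a unit idèle, the valuations $v_w(a_w)$ are forced to be constant along $G$-orbits, so the associated ideal $\mathfrak{A}_a$ is $G$-invariant and supported above $\Sigma$. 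In the situation relevant here one has $G_v=G$ at every $v\in\Sigma$, so the unique prime above each such $v$ is $G$-fixed and $[\mathfrak{A}_a]\in\overline{\Sigma^G}$. The reverse inclusion is a direct verification: for $\mathfrak{P}\in\Sigma^G$ with uniformiser $\pi_\mathfrak{P}$, the unit $\sigma\pi_\mathfrak{P}/\pi_\mathfrak{P}\in\mathcal{O}_{L,w}^\times$ has trivial $G$-norm and its $\beta$-image equals $\pi([\mathfrak{P}])$.

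The main obstacle is really just writing down the two parallel descriptions of $\delta_{\mathrm{mid}}$ and $\delta_{\mathrm{bot}}$: once available, the commutativity of the central square is automatic (both paths send $[\mathfrak{A}]$ to $[\sigma a/a]$ computed from the same idèle lift $a$), the identification $\hat{H}^{-2}(G,Cl_L)\cong\hat{H}^0(G,Cl_L)$ is tautological for cyclic $G$, and the only remaining subtlety is the idèle-adjustment argument guaranteeing that $\mathfrak{A}_a$ may be chosen with support above $\Sigma$.
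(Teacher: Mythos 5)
Your argument is correct and rests on the same commutative diagram that underlies the paper's proof, but it is packaged globally rather than place by place. The paper applies Shapiro's lemma immediately, writes $\beta=\prod_{v\in\Sigma}\beta_v$, and then evaluates each $\beta_v$ on the explicit representative $(\pi_{w_1}^{1-\sigma_v},\dots,\pi_{w_g}^{1-\sigma_v})$ using the commutative square with top row $\hzG{\mathrm{Id}_L}\xrightarrow{1-\sigma}\huG{\prod_{w\mid v}\mathcal O_w^\times}$; the restriction to $\Sigma$ is then automatic because only the ramified factors survive Shapiro. You instead keep the idèle group intact, invoke Hilbert~90 to get $\hat H^{-1}(G,\mathbb A_L^\times)=0$ and hence the surjectivity of the connecting map $\delta_{\mathrm{mid}}:\hat H^{0}(G,\mathrm{Id}_L)\twoheadrightarrow\hat H^{-1}(G,\mathbb U_L)$, and then do the work of restricting to $\Sigma$ by hand via the idèle-adjustment step (multiplying by a suitable element of $\mathbb A_K^\times$ so that $a$ is a unit outside $\Sigma$ and $\mathfrak A_a$ is $G$-invariant with support above $\Sigma$). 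That adjustment step is exactly the content that the paper gets for free from the Shapiro decomposition $\huG{\mathbb U_L}\cong\huG{U_\Sigma}$, so the two routes trade a place-by-place computation for a Hilbert~90 argument. Both are valid; yours uses slightly heavier machinery but makes the role of $\hat H^{-1}(G,\mathbb A_L^\times)=0$ explicit.

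One small caveat: you assert ``in the situation relevant here $G_v=G$ at every $v\in\Sigma$.'' This is not part of the lemma's hypotheses (the paper's proof handles $g>1$ by showing $\mathrm{Im}(\beta_v)$ is generated by the class of the $G$-invariant product $\mathfrak p_{w_1}\cdots\mathfrak p_{w_g}$), although it is forced if one reads $\Sigma^G$ literally as ``$G$-fixed primes'' and it does hold in the paper's application where $p$ is totally ramified in each $F_n/F$. Your argument in fact does not need this restriction for the forward inclusion—you already produce a $G$-invariant ideal $\mathfrak A_a$ supported above $\Sigma$—so you could drop the remark and simply interpret $\overline{\Sigma^G}$ as the image of $G$-invariant ideals supported above $\Sigma$, matching what the paper actually proves.
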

\begin{proof} First of all, the above decomposition
$$
\huG{U_\Sigma}\cong \prod_{v\in\Sigma}\huG{\prod_{w\mid v}\mathcal{O}_w^\times}
$$
allows to write $\beta=\prod\beta_v$ where $\beta_v$ is the restriction of $\beta$ to the $v$-th factor $\huG{\prod \mathcal{O}_w^\times}$; we therefore fix a place $v\in\Sigma$ and we show that $\mathrm{Im}(\beta_v)=\pi(\overline{\mathfrak{p}_{w_1}\cdots\mathfrak{p}_{w_g}})$ where the $\mathfrak{p}_{w_i}$'s are the primes of $L$ above $v$. This follows from the fact that $\huG{\prod \mathcal{O}_w^\times}$ is a product of $g$ (the number of places $w\mid v$) cyclic groups, each of order $e_v$, the ramification index of $v$ in $L/K$. Now fix a uniformizer $\pi_{w_i}\in\mathcal{O}_{w_i}$ for all $w_i\mid v$ and chose a generator $\sigma_v$ of $G_v$: we have 
$$
\beta_v(\pi_{w_1}^{1-\sigma_{v}},\hdots,\pi_{w_g}^{1-\sigma_{v}})=\pi(\overline{\mathfrak{p}_{w_1}\cdots\mathfrak{p}_{w_g}})\;,
$$
as can immediately be seen from the commutativity of
\begin{equation*}\xymatrix{
\hzG{\mathrm{Id}_L}\ar@{>}[0,1]\ar@{>}[1,0]^{1-\sigma}&\huG{\prod_{w\mid v}\mathcal{O}_w^\times}\ar@{>}[1,0]^{\beta_v}\\
\hzG{Cl_L}\ar@{>}[0,1]^\pi&\huG{Q_L}
}\end{equation*}
where $\sigma$ is a generator of $G$ (inducing $\sigma_v$ through $G_v\hookrightarrow G$).
\end{proof}
\begin{prop}[See \cite{Iwa73}]\label{ker-capit} Let $\jmath: Cl_K\rightarrow Cl_L$ be the map induced by extending fractional ideals of $\mathcal{O}_K$ to $\mathcal{O}_L$. Then
$$
\mathrm{Ker}(\jmath)\cong\mathrm{Ker}\Big(\hUG{\mathcal{O}_L^\times}\rightarrow\hUG{U_\Sigma}\Big)\;.
$$
\end{prop}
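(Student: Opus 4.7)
The plan is to identify the quotient $Q_L^G/Q_K$ (where $Q_K:=\mathbb{U}_K/\mathcal{O}_K^\times$, viewed inside $Q_L^G$) simultaneously with the left-hand and right-hand kernels in the claim. The two halves will come from chasing (\ref{magic square}): the first column delivers the cohomological identification, while a combination of the middle column and the bottom row produces the capitulation kernel.

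First I would take the long exact $G$-cohomology sequence of the first column $0\to \mathcal{O}_L^\times\to \mathbb{U}_L\to Q_L\to 0$, obtaining
$$\mathbb{U}_K\longrightarrow Q_L^G\longrightarrow \hUG{\mathcal{O}_L^\times}\longrightarrow \hUG{\mathbb{U}_L}.$$
Because $\mathbb{U}_K\cap\mathcal{O}_L^\times=\mathcal{O}_K^\times$ (a global unit of $L$ lying in $\mathbb{A}_K^\times$ must already be a unit of $K$), the image of $\mathbb{U}_K$ in $Q_L^G$ is exactly $Q_K$. Combined with the Shapiro-type decomposition $\hUG{\mathbb{U}_L}\cong \hUG{U_\Sigma}$ recalled at the start of the section, this yields
$$\mathrm{Ker}\bigl(\hUG{\mathcal{O}_L^\times}\to \hUG{U_\Sigma}\bigr)\;\cong\;Q_L^G/Q_K.$$

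Next I would match $Q_L^G/Q_K$ with $\mathrm{Ker}(\jmath)$. Taking $G$-invariants of the middle column $0\to L^\times\to \mathbb{A}_L^\times\to C_L\to 0$ and invoking Hilbert~90 gives the classical isomorphism $C_L^G\cong C_K$. Left-exactness of $(-)^G$ applied to the bottom row $0\to Q_L\to C_L\to Cl_L\to 0$ then yields an injection $Q_L^G\hookrightarrow C_L^G=C_K$. I would check that the composition $Q_L^G\hookrightarrow C_K\twoheadrightarrow Cl_K$ has image equal to $\mathrm{Ker}(\jmath)$: an id\`ele class $[x]\in C_K$ lifts to $Q_L^G$ precisely when one can find $\alpha\in L^\times$ with $x\alpha\in\mathbb{U}_L$, which happens exactly when the ideal of $x$ becomes principal in $L$, i.e.\ when the class of $\mathrm{div}(x)$ in $Cl_K$ lies in $\mathrm{Ker}(\jmath)$. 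Since the image of $Q_K$ in $C_K$ is the kernel of $C_K\twoheadrightarrow Cl_K$, quotienting by $Q_K$ gives the required isomorphism $Q_L^G/Q_K\cong\mathrm{Ker}(\jmath)$.

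The main obstacle is the concrete verification in the second step that the image of $Q_L^G$ inside $C_K$ is precisely the preimage of $\mathrm{Ker}(\jmath)$ under $C_K\twoheadrightarrow Cl_K$. One must in particular check that, given $u=x\alpha\in\mathbb{U}_L$ as above, the class $[u]$ is automatically $G$-invariant in $Q_L$, that is $\sigma\alpha/\alpha\in\mathcal{O}_L^\times$; this follows because $\mathrm{div}(\alpha)=-\jmath(\mathrm{div}(x))$ is $G$-invariant, forcing $\sigma\alpha/\alpha$ to have trivial divisor. Apart from this bookkeeping the argument is entirely formal, using only Hilbert~90, the long exact sequence of $G$-cohomology and the Shapiro decomposition already recalled in the excerpt.
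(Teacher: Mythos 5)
Your argument is correct and takes essentially the same route as the paper: both identify each side of the claimed isomorphism with $Q_L^G/Q_K$ by chasing diagram~(\ref{magic square}). The paper packages both halves as Snake-Lemma applications (to the bottom row and to the first column), whereas you take the long exact $G$-cohomology sequence of the first column directly and, for the other half, unwind the boundary map by hand via Hilbert~90 and the divisor map; this is the same content, just made more explicit.
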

\begin{proof}
We simply apply Snake Lemma twice. First of all, apply it to
\begin{equation*}\xymatrix{
0\ar@{>}[0,1]&Q_K\ar@{>}[0,1]\ar@{>}[1,0]&C_K\ar@{>}[0,1]\ar@{=}[1,0]&Cl_K\ar@{>}[0,1]\ar@{>}[1,0]^\jmath&0\\
0\ar@{>}[0,1]&Q_L^G\ar@{>}[0,1]&C_L^G\ar@{>}[0,1]&Cl_L^G\ar@{>}[0,1]&\hUG{Q_L}\;:
}
\end{equation*}
it shows $\mathrm{Ker}(\jmath)\cong Q_L^G/Q_K$. Then apply it to
\begin{equation*}\xymatrix{
0\ar@{>}[0,1]&\mathcal{O}_K^\times\ar@{>}[0,1]\ar@{=}[1,0]&\mathbb{U}_K\ar@{>}[0,1]\ar@{=}[1,0]&Q_K\ar@{>}[0,1]\ar@{>}[1,0]&0\\
0\ar@{>}[0,1]&(\mathcal{O}_L^\times)^G\ar@{>}[0,1]&(\mathbb{U}_L)^G\ar@{>}[0,1]&Q_L^G\ar@{>}[0,1]&\hUG{\mathcal{O}_L^\times}\;,
}
\end{equation*}
finding $Q_L^G/Q_K\cong\mathrm{Ker}\Big(\hUG{\mathcal{O}_L^\times}\rightarrow\hUG{U_\Sigma}\Big)$.
\end{proof}
\rmk The above proof does not use the hypothesis that $G$ be cyclic. In fact, we will in the sequel we apply the proposition assuming this cyclicity, finding $\mathrm{Ker}(\jmath)\cong\mathrm{Ker}\Big(\huG{\mathcal{O}_L^\times}\rightarrow\huG{U_\Sigma}\Big)\;$. We remark that in his thesis \cite{Gre76} Greenberg gives the following criterion: $\lambda=0$ if and only if the map $\jmath$ relative to the cyclic extension $F_n/F$, restricted to $p$-Sylow subgroups, becomes $0$ for sufficiently large $n$. This will be the starting point for our proof of Theorem \ref{LHF}.\\

Finally, we state a Lemma about the cohomology of the direct product of two groups. The main source for this is \cite{Sch90}. Suppose that $F$ is a Galois subfield of $K$ such that $L/F$ is Galois with $\mathrm{Gal}(L/F)\cong \Delta\times G$ where $\Delta=\mathrm{Gal}(K/F)$ is abelian and the isomorphism above is induced by restriction. Then every ``arithmetic'' module attached to $L$ comes equipped with a natural action of $\Delta\times G$ and we want to compare this action with the natural one on the Tate cohomology of $G$. We have the following
\begin{lmm}\label{cohomRene'}[\cite{Sch90}, Section 4] Suppose that $G$ is a $p$-group and that $p\nmid |\Delta|$. Let $M$ be a $\mathbb{Z}[\Delta\times G]-module$: then, for every $q\in\mathbb{Z}$, the natural map
$$
\hqG{M\otimes\mathbb{Z}_p}^\Delta\longrightarrow\hat{H}^q\Big(G,(M\otimes\mathbb{Z}_p)^\Delta\Big)\;.
$$
is an isomorphism (of abelian groups with trivial $\Delta\times G$-action).
\end{lmm}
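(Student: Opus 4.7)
The plan is to use the Reynolds-type idempotent $e_\Delta := \frac{1}{|\Delta|}\sum_{\delta\in\Delta}\delta$, which lies in $\mathbb{Z}_p[\Delta]$ precisely because $p\nmid|\Delta|$. Setting $N := M\otimes\mathbb{Z}_p$, the fact that the $\Delta$- and $G$-actions on $N$ commute means that $e_\Delta$ is a $G$-equivariant endomorphism of $N$; together with its idempotency this yields a canonical splitting of $\mathbb{Z}_p[\Delta\times G]$-modules
$$
N \;=\; N^\Delta \;\oplus\; N_0, \qquad N_0 := (1-e_\Delta)N,
$$
in which $\Delta$ acts trivially on $N^\Delta$ and the norm $\sum_\delta \delta$ acts as zero on $N_0$.

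I would then apply $\hat{H}^q(G,-)$ and use additivity of Tate cohomology to obtain a splitting of $\mathbb{Z}_p[\Delta]$-modules
$$
\hat{H}^q(G,N) \;=\; \hat{H}^q(G,N^\Delta)\;\oplus\;\hat{H}^q(G,N_0).
$$
The natural map of the lemma is, from this viewpoint, nothing but the projection onto the first summand; equivalently, it is induced by the $G$-morphism $e_\Delta:N\to N^\Delta$ and then restricted to $\Delta$-invariants. It therefore suffices to check that the second summand contributes nothing to $(-)^\Delta$. By functoriality $e_\Delta$ acts as $0$ on $\hat{H}^q(G,N_0)$, since it already does so on $N_0$; but any $\Delta$-fixed class $x\in \hat{H}^q(G,N_0)$ must satisfy $e_\Delta x = x$, forcing $x = 0$. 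On the first summand, the trivial $\Delta$-action on $N^\Delta$ induces the trivial action on $\hat{H}^q(G,N^\Delta)$, so $(-)^\Delta$ acts as the identity there. Combining these two observations with the splitting yields the desired isomorphism.

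The argument is entirely formal once the idempotent $e_\Delta$ is in hand, so there is no genuine obstacle. The only point requiring any care is the initial verification that $e_\Delta$ centralizes the $G$-action on $N$, which is precisely where the direct-product hypothesis $\mathrm{Gal}(L/F)\cong\Delta\times G$ enters; it is what licenses both the $G$-equivariance of the decomposition and the additivity of cohomology invoked in the second step.
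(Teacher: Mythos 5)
The paper does not actually prove Lemma \ref{cohomRene'}; it simply cites \cite{Sch90}, Section 4, so there is no in-text argument to compare against. Your proof via the central idempotent $e_\Delta=\frac{1}{|\Delta|}\sum_{\delta\in\Delta}\delta\in\mathbb{Z}_p[\Delta]$ is correct and is precisely the standard argument underlying the cited result: the $G$-equivariant splitting $N=N^\Delta\oplus N_0$ passes through Tate cohomology by additivity, $\Delta$ acts trivially on $\hat H^q(G,N^\Delta)$, and $\hat H^q(G,N_0)^\Delta=0$ because $e_\Delta$ acts there both as zero (functorially, since $e_\Delta$ annihilates $N_0$) and as the identity on any $\Delta$-fixed class, the latter making sense since $\hat H^q(G,N_0)$ is a $\mathbb{Z}_p$-module. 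You also correctly identify the direct-product hypothesis as the point ensuring $e_\Delta$ is $G$-equivariant and the natural map as the projection induced by $e_\Delta$.
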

%%%%%%%%%%%%%%%SEZIONE 3%%%%%%%%%%%%%%%%%%%%%%%%%%%%%%
\section{Cyclotomic Units}\label{SecCycUnits}

Fix from now on a non-trivial character $\chi\neq 1$ of $\Delta$ and an odd prime $p\nmid [F:\mathbb{Q}]$ such that $\chi(p)=1$. We set $R=\mathbb{Z}_p[\mathrm{Im}(\chi)]$ and we let  $\delta\in \Delta$ act on $R$ by $x\mapsto\chi(\delta)x$. In this way, $R$ becomes a $\mathbb{Z}_p[\Delta]$-algebra. For every $\mathbb{Z}[\Delta]$-module $M$ we denote by $M(\chi)$ the $\chi$-eigenspace of $M\otimes\mathbb{Z}_p$ for the action of $\Delta$: there is an  isomorphism of $R$-modules $M(\chi^{-1})\cong\big((M\otimes\mathbb{Z}_p)\otimes_{\mathbb{Z}_p}R\big)^\Delta$ (our notation is consistent with that of \cite{Rub00}, Section $1.6$, where all this is properly spelled out: we denote by $R$ what Rubin calls $\mathcal{O}$). In particular, with notations and assumption as in Lemma \ref{cohomRene'},
\begin{equation}\label{chicohom}
\hqG{M(\chi)}\cong \hqG{M}(\chi)\qquad\forall\;q\in\mathbb{Z}\;.
\end{equation}
It is easy to see that $Cl_F(\chi)$ is isomorphic via the norm to the $p$-part of the class group of $F^{\mathrm{Ker}\chi}$, a field in which $p$ splits completely (see \cite{Sch90} for more details). Analogously, $\mathcal{O}_F^\times/Cyc_F(\chi)$ is isomorphic to $\big(\mathcal{O}_{F^{\mathrm{Ker}\chi}}^\times/Cyc_{F^{\mathrm{Ker}\chi}}\big)\otimes\mathbb{Z}_p$: replacing if necessary $F$ by $F^{\mathrm{Ker}\chi}$ we can therefore assume that $p$ splits completely in $F$, and we assume this throughout.

We now go back to the situation described in the introduction: let then $F_\infty/F$ be the $\mathbb{Z}_p$-extension of $F$ and denote by $\Gamma$ its Galois group. For every $n$ and for every prime ideal $\mathfrak{p}\subseteq\mathcal{O}_{F_n}$ dividing $p$, we let $\mathcal{O}_{F_n,\mathfrak{p}}^{\times}$ denote the local units  at $\mathfrak{p}$. Then we set
$$
U_n:=\big(\prod_{\mathfrak{p}\mid p}\mathcal{O}_{F_n,\mathfrak{p}}^{\times})(\chi)
$$
and analogously
$$
\mathcal{O}_n^\times:=\mathcal{O}_{F_n}^{\times}(\chi)\quad\mathrm{and}\quad Cyc_n:=Cyc_{F_n}(\chi)\;.
$$
Observe that since in a $\mathbb{Z}_p$-extension only primes above $p$ ramify, the set $\Sigma$ of Section \ref{SecTate} relative to $F_n/F$ is $\Sigma=\{\mathfrak{p}\subset \mathcal{O}_F,\;\mathfrak{p}|p\}$ for all $n$ and our notation is consistent with the one introduced there. We finally set
\begin{equation*}
B_n:=\mathcal{O}_n^\times/Cyc_n\;,\quad\B_n:=U_n/Cyc_n\quad\text{and}\quad A_n:=Cl_{F_n}(\chi)\;.
\end{equation*}
By Sinnot's theorem above, together with Theorem 5.3 in \cite{Sin80/81} that guarantees $p\nmid \kappa_{F_n}$ for all $n$, the groups $Cl_{F_n}\otimes\mathbb{Z}_p$ and $\big(\mathcal{O}_{F_n}^\times/Cyc_{F_n}\big)\otimes\mathbb{Z}_p$ have the same order. The character-by-character version of this result is much deeper: it is known as Gras' Conjecture, and is a consequence of the (now proven) Main Conjecture of Iwasawa Theory, as detailed in \cite{Gre77} or in \cite{BelNQD01}. It follows that $|A_n|=|B_n|$ for all $n\geq 0$.\\

\rmk The semi-local units considered by Gillard in his paper \cite{Gil79-1} are products over all $\mathfrak{p}$ above $p$ of local units that are $1\pmod{\mathfrak{p}}$. In our situation, all completions $F_\mathfrak{p}$ at primes $\mathfrak{p}\mid p$ are isomorphic to $\mathbb{Q}_p$, so the two definitions coincide and $U_n$ is a free $\mathbb{Z}_p$-module of rank $1$.\\
Moreover, since $p$ splits completely in $F/\mathbb{Q}$, all primes above $p$ totally ramify in $F_n/F$ and the subgroup of fractional ideals in $F_n$ having support above $p$ is isomorphic to $\mathbb{Z}[\Delta]$. After tensoring with $R$, we can consider the $\chi$-eigenspace, that is still cyclic (as an $R$-module, now) and so is its projection to $A_n$. Thus, it makes sense to speak of \emph{the subgroup of $A_n$ of primes above $p$}, a cyclic group that we denote by $\Pi_n$. Since it is contained in $A_n^{G_n}$, we can use (\ref{chicohom}) to restate Lemma \ref{primi} saying that (with the same notation introduced there) $\beta(\hu{U_n})=\pi(\Pi_n)$ for all $n\geq m$.\\

We now investigate in some detail the structure of $Cyc_n$. First of all, letting $f$ be the conductor of $F$, we define the unit
$$
\eta_n:=\bigg(\mathrm{Norm}^{\mathbb{Q}(\zeta_{fp^{n+1}})}_{F_n}\big(1-\zeta_{fp^{n+1}}\big)\bigg)^{\sum_{\delta\in\Delta}\chi(\delta^{-1})\delta}\in Cyc_n\;.
$$
It is a unit since $p\nmid f$ because $p$ splits completely in $F$, it is cyclotomic by definition and we projected it in the $\chi$-component: morever, Sinnott's description of Cyclotomic Units shows that $Cyc_n=\eta_0\mathbb{Z}_p\times \eta_n\mathbb{Z}_p[G_n]$ (see, for instance, section 3 of \cite{Gil79-2} for details). In particular, we have an isomorphism of $G_n$-modules $Cyc_n\cong \mathbb{Z}_p\times I_{G_n}$ where $I_{G_n}$ is the augmentation ideal in $\mathbb{Z}_p[G_n]$, and we find a split exact sequence
\begin{equation}\label{?}
0\longrightarrow\langle\eta_n\rangle\longrightarrow Cyc_n\longrightarrow \langle\eta_0\rangle\longrightarrow 0
\end{equation}
where we denote, here and in what follows, by $\langle\eta_0\rangle$ and $\langle\eta_n\rangle$ the $\mathbb{Z}_p[G_n]$-modules generated by $\eta_0$ and $\eta_n$ respectively: by the above isomorphisms, (\ref{?}) corresponds to the sequence
\begin{equation}\label{??}
0\longrightarrow I_{G_n}\longrightarrow I_{G_n}\times \mathbb{Z}_p\longrightarrow \mathbb{Z}_p\longrightarrow 0\;.
\end{equation}
\begin{lmm}\label{cyc} We have $\hz{\langle\eta_n\rangle}=0$ and $\hu{\langle\eta_0\rangle}=0$. In particular, the natural map $\hq{Cyc_n}\cong\hq{\langle\eta_0\rangle}\times\hq{\langle\eta_n\rangle}$ induced by \ref{?} gives isomorphisms of $\Delta$-modules
$$\hz{Cyc_n}\cong\hz{\langle\eta_0\rangle}$$
and
$$
\hu{Cyc_n}\cong\hu{\langle\eta_n\rangle}\;.
$$
Both are cyclic groups of order $p^n$\;.
\end{lmm}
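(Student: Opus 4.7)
The plan is to use the splitting of (\ref{?}) to obtain $Cyc_n \cong \langle\eta_0\rangle \oplus \langle\eta_n\rangle$ as $\mathbb{Z}_p[G_n]$-modules, with $\langle\eta_0\rangle \cong \mathbb{Z}_p$ (trivial action, since $\eta_0\in F$) and $\langle\eta_n\rangle \cong I_{G_n}$. By additivity of Tate cohomology on direct sums it then suffices to compute each summand separately.

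For $\langle\eta_0\rangle$: on a trivial cyclic $G$-module $\mathbb{Z}_p$ with $|G|=N$, the norm acts as multiplication by $N$ and $\sigma-1$ vanishes, so $\hat{H}^{-1}(G,\mathbb{Z}_p)=0$ (the module is torsion-free) and $\hat{H}^0(G,\mathbb{Z}_p)\cong\mathbb{Z}_p/N\mathbb{Z}_p$. Applied with $G=G_{n,m}$ this gives $\hu{\langle\eta_0\rangle}=0$ and identifies $\hz{\langle\eta_0\rangle}$ as a cyclic group of order $|G_{n,m}|$.

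For $\langle\eta_n\rangle\cong I_{G_n}$: use the augmentation short exact sequence $0\to I_{G_n}\to\mathbb{Z}_p[G_n]\to\mathbb{Z}_p\to 0$ of $\mathbb{Z}_p[G_n]$-modules. Since $G_n$ is abelian and $G_{n,m}\leq G_n$, the group ring $\mathbb{Z}_p[G_n]$ restricts to a free $\mathbb{Z}_p[G_{n,m}]$-module on any set of coset representatives for $G_n/G_{n,m}$, hence is cohomologically trivial for $G_{n,m}$. Dimension shifting in the resulting long exact sequence delivers $\hat{H}^q(G_{n,m},\langle\eta_n\rangle)\cong \hat{H}^{q-1}(G_{n,m},\mathbb{Z}_p)$ for every $q$, whence $\hz{\langle\eta_n\rangle}=0$ and, by the $2$-periodicity of cyclic-group cohomology, $\hu{\langle\eta_n\rangle}\cong\hz{\mathbb{Z}_p}$, again cyclic of order $|G_{n,m}|$. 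Note that this sequence need not split as a sequence of $G_n$-modules, but splitting is irrelevant to the dimension-shift argument.

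Reassembling, the two vanishings collapse the cohomology of the direct sum into the advertised isomorphisms $\hz{Cyc_n}\cong\hz{\langle\eta_0\rangle}$ and $\hu{Cyc_n}\cong\hu{\langle\eta_n\rangle}$, each of the common cyclic order. Nothing in the argument is really delicate; the only substantive input beyond elementary cyclic-group bookkeeping is the $\mathbb{Z}_p[G_{n,m}]$-freeness of $\mathbb{Z}_p[G_n]$, which follows at once from the abelianness of $G_n$.
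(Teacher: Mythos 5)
Your proof is correct and follows essentially the same route as the paper: both arguments rest on the $G_n$-module identifications $\langle\eta_n\rangle\cong I_{G_n}$ and $\langle\eta_0\rangle\cong\mathbb{Z}_p$, then pass the augmentation exact sequence $0\to I_{G_n}\to\mathbb{Z}_p[G_n]\to\mathbb{Z}_p\to 0$ through $G_{n,m}$-Tate cohomology, using the cohomological triviality of $\mathbb{Z}_p[G_n]$ (you make explicit the point — that $\mathbb{Z}_p[G_n]$ is $\mathbb{Z}_p[G_{n,m}]$-free — which the paper treats as obvious). One remark: the order you compute, $|G_{n,m}|=p^{n-m}$, is what the argument actually gives and is the correct value; the ``$p^n$'' in the lemma as printed holds only for $m=0$ and is evidently a slip.
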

\begin{proof} The exact sequence
$$
0\longrightarrow I_{G_n}\longrightarrow \mathbb{Z}_p[G_n]\longrightarrow \mathbb{Z}_p\longrightarrow 0
$$
shows, since $\hq{\mathbb{Z}_p[G_n]}=0$ for all $q$, that $\hz{I_{G_n}}\cong\hu{\mathbb{Z}_p}=0$. The $\mathbb{Z}_p[G_n]$-isomorphisms $\langle\eta_n\rangle\cong I_{G_n}$ and $\langle\eta_0\rangle\cong\mathbb{Z}_p$ give the result.
\end{proof}
Suppose now that $\lambda=0$. The estension $F_\infty/F$ is totally ramified since $p$ splits in $F$ and the norm maps $N^n_m:A_n\rightarrow A_m$ are surjective by class field theory for all $n\geq m$; assuming $\lambda=0$ and chosing $m$ big enough, the orders of $A_n$ and $A_m$ coincide, and these norm maps are actually isomorphisms. Therefore the projective limit $X=\varprojlim A_n$ with respect to norms stabilizes to a finite group and $A_n\cong X$ for all $n\gg 0$ (we introduce here the notation $a\gg b$, equivalent to $b\ll a$, to mean that there exists a $b_0\geq b$ such that what we are stating holds for all $a\geq b_0$). In particular, the action of $\Gamma$ on $X$ must factor through a certain quotient $G_m=\Gamma/\Gamma^{p^m}$. Therefore $G_{n,m}$ acts trivially on $A_n$ for all $n\geq m$ and the $G_{n,m}$-norm $N_{G_{n,m}}=\sum_{\tau\in G_{n,m}}\tau$ acts on $A_n$ as multiplication by $p^{n-m}$. Choosing $n$ big enough so that $p^{n-m}A_n=0$, we find $N_{G_{n,m}}A_n=0$ and
\begin{equation}\label{n>>m}\begin{split}
\hz{A_n}=&A_n^{G_{n,m}}/N_{G_{n,m}}A_n=A_n\\
&=A_n[N_{G_{n,m}}]/I_{G_{n,m}}A_n=\hu{A_n}
\end{split}\end{equation}
where $I_{G_{n,m}}$ is the augmentation ideal of $\mathbb{Z}_p[G_{n,m}]$. Therefore $A_n\cong \hu{A_n}\cong\hz{A_n}$, whenever $\lambda=0$ and $n\gg m\gg 0$. A similar argument leads to the equivalent of (\ref{n>>m}) for $B_n$, namely $\hq{B_n}\cong B_n$ for all $q\in\mathbb{Z}$.
\begin{lmm}\label{crocerossa} If $\lambda=0$ and $m\gg 0$, the natural map
$$
H^{1}(G_{n,m},Cyc_n)\longrightarrow H^{1}(G_{n,m},\mathcal{O}_n^\times)
$$
is injective for all $n\geq m$.
\end{lmm}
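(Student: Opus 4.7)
My approach is through the long exact sequence of Tate cohomology, legitimate since $G_{n,m}$ is cyclic (whence $H^1=\hat{H}^1$), applied to
$$
0\longrightarrow Cyc_n\longrightarrow \mathcal{O}_n^\times\longrightarrow B_n\longrightarrow 0.
$$
The kernel of the map in the statement coincides with the image of the connecting homomorphism
$$
\delta\colon \hz{B_n}\longrightarrow \hU{Cyc_n},
$$
so the lemma reduces to showing $\delta=0$.

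For $n\geq m\gg 0$ the analogue of (\ref{n>>m}) for $B_n$ yields $\hz{B_n}=B_n$. By exactness, $\delta=0$ is equivalent to the surjectivity of the preceding arrow $\hz{\mathcal{O}_n^\times}=\mathcal{O}_m^\times/N_{G_{n,m}}\mathcal{O}_n^\times\to B_n$. This arrow factors through the natural map $B_m\to B_n$ induced by the inclusions $\mathcal{O}_m^\times\hookrightarrow\mathcal{O}_n^\times$ and $Cyc_m\subseteq Cyc_n$, so everything boils down to the surjectivity of $B_m\to B_n$. Gras' Conjecture gives $|B_m|=|B_n|$ for $m\gg 0$, so surjectivity is equivalent to injectivity, i.e., to the rigidity identity $\mathcal{O}_m^\times\cap Cyc_n=Cyc_m$ inside $\mathcal{O}_n^\times$.

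I expect this last identity to be the main obstacle. I would attack it using the splitting $Cyc_n=\langle\eta_0\rangle\oplus\langle\eta_n\rangle$ from (\ref{?}) and the $G_n$-module identification $\langle\eta_n\rangle\cong I_{G_n}$: any $u\in\mathcal{O}_m^\times\cap Cyc_n$ can be written as $\eta_0^a\cdot\eta_n^\xi$ with $a\in\mathbb{Z}_p$ and $\xi\in I_{G_n}$, and the $G_{n,m}$-invariance of $u$ forces $\xi$ to lie in $I_{G_n}^{G_{n,m}}$. Because $\mathbb{Z}_p[G_n]$ is free over $\mathbb{Z}_p[G_{n,m}]$, such invariant elements have the form $\xi=N_{G_{n,m}}\cdot\zeta$ with $\zeta\in I_{G_m}$; together with the norm-compatibility $N_{F_n/F_m}(\eta_n)=\eta_m$ this gives $\eta_n^\xi=\eta_m^\zeta\in\langle\eta_m\rangle\subseteq Cyc_m$, and hence $u\in Cyc_m$, completing the argument.
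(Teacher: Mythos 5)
Your argument is correct in outline and reaches the right conclusion, but it takes a noticeably longer path than the paper's. The paper's proof is a one-step order count in ordinary (non-Tate) cohomology: taking $G_{n,m}$-invariants in $0\to Cyc_n\to\mathcal{O}_n^\times\to B_n\to 0$ and using $Cyc_n^{G_{n,m}}=Cyc_m$, $(\mathcal{O}_n^\times)^{G_{n,m}}=\mathcal{O}_m^\times$, the kernel in question is identified with $B_n^{G_{n,m}}/B_m$; then $|B_m|=|B_n|\geq |B_n^{G_{n,m}}|\geq |B_m|$ forces this quotient to vanish. Your route instead goes through Tate cohomology, invokes the (separately asserted) fact that $\hz{B_n}\cong B_n$ for $n\gg m$, and unwinds the connecting map $\delta$ until it reduces to surjectivity of $B_m\to B_n$, which by equal orders you convert into injectivity, i.e.\ $\mathcal{O}_m^\times\cap Cyc_n=Cyc_m$. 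Notice that this last identity is nothing but $Cyc_n^{G_{n,m}}=Cyc_m$ (since $\mathcal{O}_m^\times=(\mathcal{O}_n^\times)^{G_{n,m}}$), which is exactly the fact the paper's one-line argument rests on as well --- so your detour circles back to the same key input. Where you add genuine value is in actually proving that identity via the decomposition $Cyc_n=\langle\eta_0\rangle\times\langle\eta_n\rangle$ and the module structure $\langle\eta_n\rangle\cong\mathbb{Z}_p[G_n]/N_{G_n}\cong I_{G_n}$; the paper simply asserts it. Two small cautions on that last step: the exponent $\xi$ is best taken in $\mathbb{Z}_p[G_n]/\mathbb{Z}_p N_{G_n}$ rather than literally in $I_{G_n}$, and after writing $\xi=N_{G_{n,m}}\zeta$ what you need is that the \emph{image} of $\zeta$ in $\mathbb{Z}_p[G_m]$ lies in $I_{G_m}$ (which follows from $\varepsilon(\xi)=0$), not $\zeta$ itself. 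Also be aware that the norm relation $N_{F_n/F_m}(\eta_n)=\eta_m$ fails for $m=0$ in the split case (there $N^n_0(\eta_n)=1$ because $\chi(p)=1$ kills the Euler factor), but this is harmless since the lemma assumes $m\gg 0$. Finally, your reliance on $\hz{B_n}=B_n$ makes your proof depend on a statement that the paper justifies only briefly and separately; the paper's argument avoids Tate cohomology of $B_n$ altogether and is therefore more self-contained.
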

\begin{proof} Taking $G_{n,m}$-cohomology in the exact sequence defining $B_n$ gives
\begin{multline}
0\longrightarrow H^0(G_{n,m},Cyc_n)\longrightarrow H^0(G_{n,m},\mathcal{O}_n^\times)\longrightarrow H^0(G_{n,m},B_n)\longrightarrow \\
\longrightarrow H^1(G_{n,m},Cyc_n)\longrightarrow H^1(G_{n,m},\mathcal{O}_n^\times)\;.
\end{multline}
Since $G_{n,m}$-invariants of $Cyc_n$ and $\mathcal{O}_n^\times$ are $Cyc_m$ and $\mathcal{O}_m^\times$ respectively, we find $\mathrm{Ker}\Big(H^{1}(G_{n,m},Cyc_n)\longrightarrow H^{1}(G_{n,m},\mathcal{O}_n^\times)\Big)=B_n^{G_{n,m}}/B_m$. Assuming that $m$ is big enough and $\lambda=0$ implies that the orders of $B_n$ and $B_m$ coincide, and the same holds, \emph{a fortiori}, for $B_n^{G_{n,m}}$ and $B_m$. Thus, the above kernel is trivial.
\end{proof}
\section{Semi-local Units modulo Cyclotomic Units}
%%%%%%%%%%%%%%%SEZIONE 3%%%%%%%%%%%%%%%%%%%%%%%%%%%%%%
We now state a very useful result about semi-local units in our setting; it can already be found in a paper by Iwasawa \cite{Iwa60}. We keep the same notation introduced in the previous section and we make from now on constant use of Lemma \ref{cohomRene'} above, especially in the form of the isomorphism in (\ref{chicohom}). 
\begin{dfz}\label{U1 e B1} We define $U_n^1$ to be the kernel $U_n^1=\mathrm{Ker}(N^n_0:U_n\rightarrow U_0)$ and we set $\B_n^1=U_n^1/\langle\eta_n\rangle$.
\end{dfz}
\begin{prop}\label{splitting} The natural map $U_n^1\times U_0\hookrightarrow U_n$ induced by injections is an isomorphism of $G_n$-modules. It induces a decomposition $\B_n\cong \B_n^1\times\B_0$.
\end{prop}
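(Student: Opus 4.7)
My plan is to verify that the natural map $\mu\colon U_n^1\times U_0\to U_n$, $(u_1,u_0)\mapsto u_1u_0$, is a $G_n$-equivariant isomorphism, and then to push the already-established decomposition $Cyc_n\cong\langle\eta_0\rangle\oplus\langle\eta_n\rangle$ from (\ref{?}) through the quotient. Injectivity of $\mu$ is easy: if $u_1u_0=1$ then $u_0\in U_0\cap U_n^1$, and the $G_n$-triviality of $U_0$ makes the norm act on $u_0$ as $u_0\mapsto u_0^{p^n}$; the condition $u_0\in U_n^1$ then gives $u_0^{p^n}=1$, which forces $u_0=1$ by the torsion-freeness of $U_0$ (it is the $\chi$-eigenspace of a product of principal local unit groups, all free over $\mathbb{Z}_p$ after killing prime-to-$p$ torsion).

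Surjectivity, again by torsion-freeness, reduces to the key claim $N^n_0(U_n)=p^n U_0$: granted this, the unique $u_0\in U_0$ with $u_0^{p^n}=N^n_0(u)$ produces $u_1:=u u_0^{-1}\in U_n^1$. I would prove the claim by combining two inputs. The first is local: since $p$ splits completely in $F$ and $F_n/F$ is totally ramified at every prime above $p$, local class field theory yields $[\mathcal{O}_{F,\mathfrak{p}}^\times:N\mathcal{O}_{F_n,\mathfrak{p}_n}^\times]=p^n$ at each $\mathfrak{p}|p$; packaging these across the primes above $p$ (on which $\Delta$ acts transitively) and extracting the $\chi$-eigenspace via Lemma \ref{cohomRene'} identifies $[U_0:N^n_0(U_n)]$ with $[U_0:p^n U_0]$. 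The second is structural: $N^n_0(U_n)$ is a closed $R$-submodule of $U_0\cong R$ (closed by compactness of $U_n$, $R$-stable by $R$-linearity of the norm), and since $R$ is a DVR unramified over $\mathbb{Z}_p$ (as $p\nmid|\Delta|$), the closed $R$-submodules of $U_0$ are totally ordered by inclusion, so equality of indices forces equality as submodules.

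For the induced decomposition of $\mathcal{B}_n$, the splitting $Cyc_n\cong\langle\eta_0\rangle\oplus\langle\eta_n\rangle$ is already known. Clearly $\langle\eta_0\rangle\subset U_0$. To check $\langle\eta_n\rangle\subset U_n^1$, I would compute $N^n_0(\eta_n)$: telescoping the standard cyclotomic norm relations down to level $fp$ and then invoking $\mathrm{Norm}^{\mathbb{Q}(\zeta_{fp})}_{\mathbb{Q}(\zeta_f)}(1-\zeta_{fp})=(1-\zeta_f^{p})/(1-\zeta_f)=(1-\zeta_f)^{\sigma_p-1}$ reduces it to $\mathrm{Norm}^{\mathbb{Q}(\zeta_f)/F}\bigl((1-\zeta_f)^{\sigma_p-1}\bigr)^{e_\chi}$, where $\sigma_p$ is the Frobenius at $p$ in $\mathrm{Gal}(\mathbb{Q}(\zeta_f)/\mathbb{Q})$. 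Since $p$ splits completely in $F$, $\sigma_p$ restricts to the identity on $F$ and so lies in $\mathrm{Gal}(\mathbb{Q}(\zeta_f)/F)$; the norm to $F$ is invariant under this subgroup, giving $N^n_0(\eta_n)=1$. Quotienting $U_n=U_n^1\oplus U_0$ by the compatible $Cyc_n=\langle\eta_n\rangle\oplus\langle\eta_0\rangle$ then delivers $\mathcal{B}_n\cong\mathcal{B}_n^1\times\mathcal{B}_0$.

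The decisive step is the key claim $N^n_0(U_n)=p^n U_0$: one needs both a careful index calculation in the $\chi$-eigenspace (correctly tracking the induced $\Delta$-action on the set of primes above $p$) and the DVR structure of $R$ to pass from equality of indices to equality of submodules. The cyclotomic-units computation in the last step, though formally a separate point, is straightforward once one recognizes that $\chi(p)=1$ is exactly what makes $\sigma_p-1$ act as zero on the relevant norm.
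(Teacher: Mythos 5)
Your argument is correct and follows essentially the same route as the paper: local class field theory for the totally ramified extension supplies the index $[U_0:N^n_0(U_n)]=p^n$ (over $\mathbb{Z}_p$, or the analogous index over $R$), and the torsion-freeness of $U_0$ gives the splitting. Two small stylistic remarks. First, your DVR argument for upgrading the index equality to the submodule equality $N^n_0(U_n)=p^nU_0$ is slightly heavier than needed: since $U_0\cong R$ is a cyclic torsion-free $R$-module, $U_0/N^n_0(U_n)$ being a quotient of the correct size already forces $N^n_0(U_n)=p^nU_0$, and the paper simply observes this directly (extracting $p^n$-th roots to get the section $U_0\hookrightarrow U_n$). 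Second, your explicit cyclotomic-norm computation of $N^n_0(\eta_n)=1$ is correct but redundant given sequence (\ref{?}): that sequence already identifies $\langle\eta_n\rangle$ with the augmentation ideal $I_{G_n}$ as $\mathbb{Z}_p[G_n]$-modules, and $N_{G_n}I_{G_n}=0$, so $\langle\eta_n\rangle\subset\ker N^n_0=U_n^1$ is already built into the structure of $Cyc_n$ the paper established; the paper's one-line verification of the commutative square encodes exactly this. None of this affects correctness — both your proof and the paper's are built on local class field theory plus the known $G_n$-module structure of $Cyc_n$ from (\ref{?}).
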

\begin{proof} Consider the exact sequence induced by the norm map $N^n_0$
$$
0\longrightarrow U_n^1\longrightarrow U_n\longrightarrow N^n_0(U_n)\subseteq U_0\longrightarrow 0\;.
$$
Since the extension $F_{n,\mathfrak{p}_n}/F_{0,\mathfrak{p}_0}$ is cyclic and totally ramified, local class field theory shows that $\hat{H}^0(G_n,U_n)=U_0/N^n_0(U_n)$ is a cyclic group of order $p^n$. As $U_0\cong\mathbb{Z}_p$ this shows $N^n_0(U_n)=U_0^{p^n}$ and since $U_0$ contains no roots of unity of $p$-power order we can identify this group with $U_0$ simply by extracting $p^n$-th roots. We find
\begin{equation}\label{splitseq}
0\longrightarrow U_n^1\longrightarrow U_n\stackrel{\sqrt[p^n]{N^n_0}}{\longrightarrow} U_0\longrightarrow 0\;.
\end{equation}
Since the natural embedding $U_0\hookrightarrow U_n$ is a $G_n$-linear section of (\ref{splitseq}), it splits the sequence and therefore gives an isomorphism $U_n^1\times U_0\cong U_n$.

The fact that this splitting induces an isomorphism $\B_n\cong\B_n^1\times \B_0$ simply follows from the commutativity of the following diagram:
$$\xymatrix{
0\ar@{>}[0,1]&\langle\eta_n\rangle\ar@{>}[0,1]\ar@{^(->}[1,0]&Cyc_n\ar@{>}[0,1]^{\sqrt[p^n]{N^n_0}}\ar@{^(->}[1,0]&\langle\eta_0\rangle\ar@{>}[0,1]\ar@{^(->}[1,0]&0\\
0\ar@{>}[0,1]&U_n^1 \ar@{>}[0,1]&U_n\ar@{>}[0,1]^{\sqrt[p^n]{N^n_0}}&U_0\ar@{>}[0,1]&0\;.
}$$
\end{proof}
More useful than the splitting itself is the following easy consequence:
\begin{cor}\label{cohom-splitting} For every $n\geq m\geq 0$ and for every $q\in\mathbb{Z}$ the natural maps
$$
\hq{U_n}\rightarrow\hq{U_n^1}\times\hq{U_0}
$$
and
$$
\hq{\B_n}\rightarrow\hq{\B_n^1}\times\hq{\B_0}\;.
$$
induced by Proposition \ref{splitting} are isomorphisms of abelian groups. In particular, we have identifications $\hu{U_n}=\hu{U_n^1}$ and $\hz{U_n}=\hz{U_0}$.
\end{cor}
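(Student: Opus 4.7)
The whole plan rests on the additivity of Tate cohomology. Proposition \ref{splitting} supplies $G_n$-linear splittings $U_n \cong U_n^1 \times U_0$ and $\B_n \cong \B_n^1 \times \B_0$; these are \emph{a fortiori} $G_{n,m}$-decompositions, and since $\hat{H}^q$ commutes with finite direct sums both displayed isomorphisms in the corollary follow at once for every $q \in \mathbb{Z}$. So the real content of the corollary is the ``in particular'' statement, which amounts to the two vanishings $\hu{U_0} = 0$ and $\hz{U_n^1} = 0$.

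The first vanishing is the easy one: $G_{n,m}$ fixes $F_0 = F$ and so acts trivially on $U_0 \cong \mathbb{Z}_p$. Then $I_{G_{n,m}} U_0 = 0$ and $N_{G_{n,m}}$ acts as multiplication by $p^{n-m}$, which is injective on the torsion-free module $U_0$; hence $\hu{U_0} = \ker(N_{G_{n,m}})/I_{G_{n,m}}U_0 = 0$.

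The remaining vanishing $\hz{U_n^1} = 0$ is the step I expect to take real work, and my plan is a pure order comparison rather than a direct cocycle argument. The local extension $F_{n,\mathfrak{p}_n}/F_{m,\mathfrak{p}_m}$ is cyclic and totally ramified, since it sits inside the unique $\mathbb{Z}_p$-extension of $\mathbb{Q}_p$, so the local class field theory argument used to prove Proposition \ref{splitting} carries over verbatim at this intermediate layer and yields $\hz{U_n} \cong U_m/N^n_m(U_n)$, cyclic of order $p^{n-m}$. On the other hand, triviality of the $G_{n,m}$-action on $U_0$ gives $\hz{U_0} \cong \mathbb{Z}_p/p^{n-m}\mathbb{Z}_p$, of the very same order. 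Combining these with the splitting $\hz{U_n} \cong \hz{U_n^1} \oplus \hz{U_0}$ already produced in the first step forces $\hz{U_n^1}$ to be trivial, so the projection onto the second factor becomes the desired identification $\hz{U_n} \cong \hz{U_0}$. I do not foresee a genuine obstacle: once Proposition \ref{splitting} is in hand, the whole corollary reduces to bookkeeping of orders via the same local norm-index input that already powered the splitting.
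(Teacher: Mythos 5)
Your proposal is correct and takes essentially the same route as the paper: additivity of Tate cohomology for the $G_{n,m}$-splitting from Proposition \ref{splitting}, triviality of the $G_{n,m}$-action on the torsion-free module $U_0$ to kill $\hu{U_0}$, and the local class field theory computation that $\hz{U_n}$ is cyclic of order $p^{n-m}$, forcing $\hz{U_n^1}=0$ by order comparison against $\hz{U_0}$.
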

\begin{proof} The splitting of the cohomology groups follows immediately from the Proposition. Concerning the cohomology of $U_n$, we observe that, since $G_{n,m}$ acts trivially on the torsion-free module $U_0$, the group $\hz{U_0}$ is cyclic of order $p^{n-m}$ while $\hu{U_0}=0$. This already implies that $\hu{U_n}= \hu{U_n^1}$. It also shows that $\hz{U_n^1}$ must be trivial because $\hz{U_n}$ is itself cyclic of order $p^{n-m}$ by local class field theory.
\end{proof}

\begin{lmm}\label{ordine cohom} For every $m\geq 0$ and for every $n\gg m$ there are isomorphisms $\hq{\B_n^1}\cong \mathbb{Z}_p/L_p(1,\chi)$ and $\hq{\B_0}\cong \mathbb{Z}_p/L_p(1,\chi)$ holding for every $q\in\mathbb{Z}$.
\end{lmm}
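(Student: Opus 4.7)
The claim factors into two pieces—a computation for $\mathcal{B}_0$ and one for $\mathcal{B}_n^1$—and I would handle them in that order, since the first is essentially a cohomological restatement of the $p$-adic class number formula. Because $p$ splits completely in $F$ and every completion above $p$ is $\mathbb{Q}_p$, the remark in Section \ref{SecCycUnits} identifies $U_0$ with $R$ as an $R$-module. Leopoldt's $p$-adic analogue of the analytic class number formula at $s=1$ asserts that the image of $\eta_0$ in $U_0\cong R$ equals $L_p(1,\chi)$ up to an element of $R^{\times}$, so $\mathcal{B}_0=U_0/\langle\eta_0\rangle$ is a finite cyclic group of order $|L_p(1,\chi)|_p^{-1}$ with trivial $\Gamma$-action. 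Once $n-m$ is large enough that $p^{n-m}$ annihilates $\mathcal{B}_0$, both $\hz{\mathcal{B}_0}=\mathcal{B}_0/p^{n-m}\mathcal{B}_0$ and $\hu{\mathcal{B}_0}=\mathcal{B}_0[p^{n-m}]$ degenerate to $\mathcal{B}_0$, and the $2$-periodicity of Tate cohomology over cyclic groups yields $\hq{\mathcal{B}_0}\cong\mathbb{Z}_p/L_p(1,\chi)$ in every degree.

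For $\mathcal{B}_n^1$, my strategy is to take $G_{n,m}$-cohomology of the defining exact sequence $0\to\langle\eta_n\rangle\to U_n^1\to\mathcal{B}_n^1\to 0$ and identify the cokernel that emerges. Lemma \ref{cyc} already gives $\langle\eta_n\rangle\cong I_{G_n}$, and combining the resolution $0\to I_{G_n}\to\mathbb{Z}_p[G_n]\to\mathbb{Z}_p\to 0$ with the vanishing of $\hq{\mathbb{Z}_p[G_n]}$ (since $\mathbb{Z}_p[G_n]$ is free over $\mathbb{Z}_p[G_{n,m}]$) produces $\hz{\langle\eta_n\rangle}=0$ and $\hu{\langle\eta_n\rangle}\cong\mathbb{Z}/p^{n-m}$. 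Corollary \ref{cohom-splitting} then reduces $\hq{U_n^1}$ to $\hq{U_n}$ and $\hq{U_0}$: the first is cyclic of order $p^{n-m}$ in every degree by local class field theory applied to the totally ramified cyclic extension $F_n/F_m$, while the second is $\mathbb{Z}/p^{n-m}$ in degree $0$ and vanishes in degree $-1$, leaving $\hz{U_n^1}=0$ and $\hu{U_n^1}\cong\mathbb{Z}/p^{n-m}$. The long exact sequence now identifies $\hu{\mathcal{B}_n^1}$ with the cokernel of a map between two copies of $\mathbb{Z}/p^{n-m}$, and the finiteness of $\mathcal{B}_n^1$ (Herbrand quotient $1$) forces $|\hz{\mathcal{B}_n^1}|=|\hu{\mathcal{B}_n^1}|$.

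The last step, and the place where the real work lies, is to identify this cokernel with $\mathbb{Z}_p/L_p(1,\chi)$. I would use that both $\langle\eta_n\rangle$ and $U_n^1$ are isomorphic to $I_{G_n}$ as $R[G_n]$-modules---for $U_n^1$ this combines the remark that $U_n$ is free of rank one over $R[G_n]$ with the identity $\mathrm{ann}_{R[G_n]}(N_{G_n})=I_{G_n}$---so the inclusion becomes multiplication by some $\alpha\in R[G_n]$ whose augmentation $\varepsilon(\alpha)\in R$ matches the image of $\eta_0$ in $U_0\cong R$, i.e.\ $L_p(1,\chi)$ up to a unit by the first part. The map induced on $\hu{I_{G_n}}\cong\mathbb{Z}/p^{n-m}$ is then multiplication by $\varepsilon(\alpha)$, so its cokernel is cyclic of order $|L_p(1,\chi)|_p^{-1}$, completing the argument. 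Making the identification $\varepsilon(\alpha)=L_p(1,\chi)$ rigorous is the main obstacle: it synthesises the distribution/norm relation $N^n_0(\eta_n)=\eta_0$ (the Euler-system compatibility at the heart of cyclotomic units) with the $p$-adic class number formula, and is really the same $L$-value input already used in the $\mathcal{B}_0$ case; once accepted, the Tate-cohomology bookkeeping is routine.
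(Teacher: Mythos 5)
Your treatment of $\mathcal{B}_0$ is sound and uses the same input as the paper: the identification $\mathcal{B}_0\cong\mathbb{Z}_p/L_p(1,\chi)$ is exactly the $q=0$ instance of Gillard's Theorem~2 of \cite{Gil79-1} (equivalently, the Coleman--Iwasawa computation of the index of cyclotomic units in semi-local units), and once the action of $G_{n,m}$ on the finite group $\mathcal{B}_0$ is observed to be trivial the Tate cohomology degenerates to $\mathcal{B}_0$ itself. This matches the paper's argument for the $\mathcal{B}_0$ half of the statement.

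For $\mathcal{B}_n^1$, however, your proposal contains a genuine gap, and moreover the heuristic you offer to fill it is incorrect precisely in the split situation the paper is working in. You correctly reduce matters to computing the cokernel of a map $\hu{\langle\eta_n\rangle}\to\hu{U_n^1}$ between cyclic groups of order $p^{n-m}$, and you correctly identify both $\langle\eta_n\rangle$ and $U_n^1$ with $I_{G_n}$ so that the inclusion is encoded by some $\alpha\in R[G_n]$ (well defined modulo $(N_{G_n})$), with the induced map on $\hu{I_{G_n}}\cong\hz{\mathbb{Z}_p}$ being multiplication by $\varepsilon(\alpha)$. But the key step, $\varepsilon(\alpha)\stackrel{p}{=}L_p(1,\chi)$, is exactly where the real content lies, and your proposed derivation via the Euler-system relation ``$N^n_0(\eta_n)=\eta_0$'' cannot work: in the $\chi$-eigenspace with $\chi(p)=1$ one has $N^n_0(\eta_n)=1$, because the Euler factor $(\sigma_p-1)$ appearing in $N^{\mathbb{Q}(\zeta_{fp})}_{\mathbb{Q}(\zeta_f)}(1-\zeta_{fp})=(1-\zeta_f)^{\sigma_p-1}$ is killed by $e_\chi$ when $\chi(p)=1$. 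This is the very reason $\eta_n$ lands in $U_n^1=\ker(N^n_0)$ at all, and is reflected in the split structure $Cyc_n\cong\mathbb{Z}_p\times I_{G_n}$ of Lemma~\ref{cyc}; the norm relation therefore produces $0$, not $L_p(1,\chi)$, and gives no handle on $\varepsilon(\alpha)$. So the ``same $L$-value input already used in the $\mathcal{B}_0$ case'' does not propagate in the way you suggest. The paper avoids this by importing the full statement of Gillard's Theorem~2, which identifies $\mathcal{B}_n$ as the $\Lambda$-module $\Lambda/(f(T),\omega_n(T)/T)$ with $f(\kappa^s-1)=L_p(1-s,\chi)$; the Tate cohomology of $G_{n,m}$ is then computed purely by manipulating the ideals $(f(T),\omega_n(T)/T,\omega_m(T))$ in $\Lambda$, after which evaluation at $T=0$ produces $f(0)=L_p(1,\chi)$. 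That identification of the $\Lambda$-module structure is exactly the ingredient missing from your argument, and I do not see how to obtain it from the distribution relation plus the level-zero class number formula alone.
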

\begin{proof} Let $\Lambda:=\mathbb{Z}_p[[T]]$ and fix an isomorphism $\varpi:\Lambda\cong\mathbb{Z}_p[[\Gamma]]$. This isomorphism fixes a choice of a topological generator $\varpi(1+T)=:\gamma_0$ of $\Gamma$ and we denote by $\kappa\in\mathbb{Z}_p^\times$ the element $\varepsilon_{cyc}(\gamma_0)$ where
$$
\varepsilon_{cyc}:\mathrm{Gal}(\bar{F}/F)\longrightarrow \mathbb{Z}_p^\times
$$
is the cyclotomic character of $F$. The main tool of the proof will be Theorem 2 of \cite{Gil79-1}, that gives isomorphisms of $\mathbb{Z}_p[[\Gamma]]$-modules
\begin{equation}\label{gil=}
\B_0\cong \mathbb{Z}_p/L_p(1,\chi)\qquad\text{and}\qquad\B_n\cong \Lambda/(f(T),\omega_n(T)/T)
\end{equation}
where $\omega_n(T)=(1+T)^{p^n}-1$ and $f(T)\in \Lambda$ is the power series verifying $f(\kappa^s-1)=L_p(1-s,\chi)$ for all $s\in\mathbb{Z}_p$. We make $\Gamma$ act on the modules appearing in (\ref{gil=}) by $\gamma_0\cdot x=\varpi(\gamma_0)x=(1+T)x$ for all $x\in \B_0$ (\emph{resp.} all $x\in \B_n$): this induces the action of $G_{n,m}$ we need to compute the cohomology with respect to.

Starting with $\B_0$, observe that the action of $\Gamma$, and thus of its subquotient $G_{n,m}$, is trivial on the \emph{finite group} $\B_0$: as in (\ref{n>>m}) we get
$$
\hq{\B_0}\cong\B_0\qquad\text{for all }n\gg m\gg 0\;.
$$
and we apply (\ref{gil=}) to get our claim.

Now we compute $\hu{\B_n^1}$: by definition, $\hu{\B_n^1}=\B_n^1[N_{G_{n,m}}]/I_{G_{n,m}}\B_n^1$. Applying $\varpi$ we find $I_{G_{n,m}}\cong \omega_m(T)(\Lambda/\omega_n(T))$ and $\varpi(N_{G_{n,m}})=\nu_{n,m}(T)$ where $\nu_{n,m}(T):=\omega_n(T)/\omega_m(T)$. Hence
$$
\hu{\B_n^1}\cong\frac{\{g(T)\in\Lambda\;\Big\vert\; g(T)\nu_{n,m}(T)\in (f(T),\omega_n(T)/T)\}}{(f(T),\omega_n(T)/T,\omega_m(T))}\;.
$$
As observed in \cite{Gil79-1}, Lemma 5, $f(T)$ and $\omega_n(T)/T$ have no common zeroes. Therefore a relation
$$
g(T)\frac{\omega_n(T)}{\omega_m(T)}=a(T)f(T)+b(T)\frac{\omega_n(T)}{T}
$$
implies $\nu_{n,m}(T)\mid a(T)$ and we find $g(T)=c(T)f(T)+b(T)\omega_m(T)/T$ for some $c(T)\in\Lambda$: thus,
\begin{equation*}\begin{split}
\hu{\B_n^1}&\cong\frac{(f(T),\omega_m(T)/T)}{(f(T),\omega_n(T)/T,\omega_m(T))}\\
&\cong \frac{(\omega_m(T)/T)}{(f(T),\omega_n(T)/T,\omega_m(T))}\;.
\end{split}\end{equation*}
The evaluation map $g(T)\omega_m(T)/T\mapsto g(0)$ gives an isomorphism
$$
\frac{(\omega_m(T)/T)}{(\omega_m(T))}\cong\mathbb{Z}_p
$$
and we find
$$
\frac{(\omega_m(T)/T)}{(f(T),\omega_n(T)/T,\omega_m(T))}\cong \mathbb{Z}_p/\big(f(0),\omega_n(0)\big)\;.
$$
Since this last module is $\mathbb{Z}_p/f(0)$ as soon as $n$ is big enough and, by definition, $f(0)=L_p(1,\chi)$, we get our claim for $q=-1$. Using now that $\B_n^1$ is finite and therefore has a trivial Herbrand quotient, we know that the order of $\hz{\B_n^1}$ is again $|L_p(1,\chi)|_p^{-1}$: the fact that it is a cyclic group comes from the exact sequence
\begin{equation*}\begin{split}
0\rightarrow \hz{\B_n^1}\rightarrow\hz{U_n^1}\rightarrow&\hz{\langle\eta_n\rangle}\rightarrow\\
&\rightarrow\hU{\B_n^1}\rightarrow 0
\end{split}\end{equation*}
since $\hz{U_n^1}$ is itself cyclic, as discussed in Corollary \ref{cohom-splitting}.\\
Finally, the fact that $G_{n,m}$ is cyclic gives isomorphisms in Tate cohomology $\hat{H}^{2q}(G_{n,m},M)\cong \hat{H}^{0}(G_{n,m},M)$ for all modules $M$ (and analogously $\hat{H}^{2q+1}(G_{n,m},M)\cong \hat{H}^{-1}(G_{n,m},M)$), so the claim for  all $q$'s follows from our computation in the cases $q=0,-1$.
\end{proof}
\begin{prop} \label{ordine fissi} Recall that $X=\varprojlim A_n$: then, $|X^\Gamma|\stackrel{p}{=}L_p(1,\chi)$, where by $a\stackrel{p}{=}b$ we mean $ab^{-1}\in\mathbb{Z}_p^\times$.
\end{prop}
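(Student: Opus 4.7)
Plan. Since $\lambda=0$, for $n\gg m\gg 0$ the inverse limit $X=\varprojlim A_n$ is finite with $X\cong A_n$ via the norm maps, while the $\Gamma$-action on $X$ factors through some quotient $G_m$, with $G_{n,m}$ acting trivially on $A_n$. Hence $X^\Gamma=A_n^{G_n}$ for $n$ large enough, and the proposition amounts to showing $|A_n^{G_n}|=|\B_0|$, the latter being $|L_p(1,\chi)|_p^{-1}$ by Gillard's formula as already used in Lemma \ref{ordine cohom}.

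The idea is to compute $|A_n^{G_n}|$ by running, in parallel, the long exact Tate cohomology sequences (with respect to $G_n$) attached to
$$0\to Cyc_n\to\mathcal{O}_n^\times\to B_n\to 0\qquad\text{and}\qquad 0\to Cyc_n\to U_n\to\B_n\to 0,$$
linked through the inclusion $\mathcal{O}_n^\times\hookrightarrow U_n$. The orders of the relevant terms are essentially at our disposal: Lemma \ref{cyc} controls the cohomology of $Cyc_n$ via the splitting (\ref{?}); Corollary \ref{cohom-splitting} combined with Lemma \ref{ordine cohom} handles the cohomology of $\B_n$, each of the two factors $\hat{H}^q(\B_n^1)$ and $\hat{H}^q(\B_0)$ contributing $|L_p(1,\chi)|_p^{-1}$; and by Gras--Sinnott $|B_n|=|X|$ for $n$ large. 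The decisive structural input is Proposition \ref{ker-capit} combined with Greenberg's criterion: under $\lambda=0$ the capitulation map $\jmath:A_0\to A_n$ vanishes for $n\gg 0$ and therefore
$$A_0\;\cong\;\ker\bigl(\hat H^1(G_n,\mathcal{O}_n^\times)\to\hat H^1(G_n,U_n)\bigr).$$
This identification, when plugged into the Snake-Lemma chase through the two parallel sequences, should isolate $|A_n^{G_n}|$ from the remaining terms.

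The main obstacle will be the delicate bookkeeping: the ``semi-local'' factor from $\B_n^1$ and the ``global'' factor from $\B_0$ each supply $|L_p(1,\chi)|_p^{-1}$, and one must show that precisely one of them survives the cancellation. The anticipated mechanism is that the semi-local contribution is absorbed into the cohomology of $U_n^1$ via the splitting $U_n\cong U_n^1\times U_0$ of Proposition \ref{splitting}, while the Greenberg-type identification absorbs the contribution of $A_0$, so that only the $\B_0$-piece is left to match $|X^\Gamma|$.
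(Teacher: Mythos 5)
Your plan is genuinely different from the paper's proof, which does not touch the Tate cohomology machinery at all: the paper first observes that $\chi(p)=1$ forces the equality $L_0=M_0$ of the maximal everywhere-unramified and maximal $p$-ramified pro-$p$ abelian extensions of $F_\infty$ (every abelian pro-$p$ extension of $F_{\infty,\mathfrak p}=\mathbb{Q}_{p,\infty}$ is unramified), deduces $Y_\Gamma=\mathscr{Y}_\Gamma$, converts $|X^\Gamma|$ into $|X_\Gamma|=|\mathscr{Y}(\chi)_\Gamma|$ using the four-term sequence for multiplication by $\gamma_0-1$, and then reads off the order from the characteristic polynomial via the Main Conjecture together with the fact that $\mathscr{Y}$ has no nonzero finite $\Gamma$-submodules. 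In other words, the analytic input in the paper is Rubin's proof of the Main Conjecture applied to $\mathscr{Y}(\chi)$, while in your plan the analytic input would be Gillard's theorem ($\B_0\cong\mathbb{Z}_p/L_p(1,\chi)$) together with Gras--Sinnott. That contrast is potentially interesting, but as written your argument is a sketch, not a proof.

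The concrete gap is that the ``Snake-Lemma chase'' that is supposed to ``isolate $|A_n^{G_n}|$ from the remaining terms'' is never carried out, and it is not clear that it can be, in the form you describe. You propose to take Tate cohomology with respect to $G_n$, but the cohomological orders you want to invoke are computed in the paper only for $G_{n,m}$ with $m\gg 0$: Lemma \ref{ordine cohom} explicitly requires $m\gg 0$ (the step $\hq{\B_0}\cong\B_0$ uses $n\gg m\gg 0$, and the power-series manipulation also fixes a large $m$), the stabilization $\hq{A_n}\cong A_n$ of (\ref{n>>m}) is a $G_{n,m}$-statement, and the $B_n^{G_{n,m}}=B_m$ step in Lemma \ref{crocerossa} is again for $m$ large. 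If you work with $G_n=G_{n,0}$, then $\hat H^0(G_n,A_n)=A_n^{G_n}/N_{G_n}A_n$ and you must control $N_{G_n}A_n$, which is not zero a priori; if instead you work with $G_{n,m}$ for $m\gg 0$, then $A_n^{G_{n,m}}=A_n$ and you lose sight of the quantity $A_n^{G_n}=X^\Gamma$ you are after. The paper sidesteps this entirely by computing $|X^\Gamma|$ directly from the structure of $\mathscr{Y}$ and only afterwards identifying $X^\Gamma$ with $\Pi_n$ via Greenberg's theorem (Corollary \ref{ordine primi}). You should either redo the orders of the $\hq{\B_n^1}$, $\hq{\B_0}$, $\hq{Cyc_n}$ etc.\ for the full group $G_n$ and verify the diagram chase actually closes up, or find a way to pass from $G_{n,m}$-cohomological information to $A_n^{G_n}$; neither step is in your proposal, and without it the claim $|A_n^{G_n}|=|\B_0|$ is unsubstantiated.
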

\begin{proof} Let $L_0$ be the maximal pro-$p$ abelian extension of $F_\infty$ everywhere unramified and let $M_0$ be the maximal pro-$p$ abelian extension of $F_\infty$ unramified outside $p$. We claim that $L_0=M_0$. This follows from the fact that for every $\mathfrak{p}\subseteq \mathcal{O}_F$ dividing $p$, the local field $F_\mathfrak{p}$ is $\mathbb{Q}_p$, since $p$ splits completely, and it therefore admits only two independent $\mathbb{Z}_p$-extensions by local class field theory. In particular, every pro-$p$ extension of $F_{\infty,\mathfrak{p}}$ that is abelian over $F_\mathfrak{p}$ must be unramified, so $M_0=L_0$. Now let $Y:=\mathrm{Gal}(L_\infty/F_\infty)$ where $L_\infty$ is the maximal pro-$p$ abelian extension of $F_\infty$ everywhere unramified: then the Artin reciprocity map gives an isomorphism $X\cong Y(\chi)$; also, let $M_\infty$ be the maximal pro-$p$ abelian extension of $F_\infty$ unramified outside $p$ and $\mathscr{Y}:=\mathrm{Gal}(M_\infty/F_\infty)$. A classical argument (see \cite{Was97}, chapter 13) shows that $Y_\Gamma=\mathrm{Gal}(L_0/F_\infty)$ and $\mathscr{Y}_\Gamma=\mathrm{Gal}(M_0/F_\infty)$: our claim above implies that $Y_\Gamma=\mathscr{Y}_\Gamma$. Since the actions of $\Delta$ and $\Gamma$ commute with each other, this also shows $X_\Gamma=\mathscr{Y}(\chi)_\Gamma$. Combine this with the following exact sequence induced by multiplication by $\gamma_0-1$ where $\gamma_0$ is a topological generator of  $\Gamma$
$$
0\longrightarrow X(\chi)^\Gamma\longrightarrow X(\chi)\stackrel{\gamma_0-1}{\longrightarrow}X(\chi)\longrightarrow X(\chi)_\Gamma\longrightarrow 0\;:
$$
it gives $|X^\Gamma|=|X_\Gamma|=|\mathscr{Y}(\chi)_\Gamma|$. The Main Conjecture of Iwasawa Theory, as proved by Rubin in the appendix of \cite{Lan90}, shows that the characteristic polynomial of $\mathscr{Y}(\chi)$ is $F(T)$ where $F(T)$ is the distinguished polynomial determined by $L_p(1-s,\chi)\stackrel{p}{=}F\big((1+p)^s-1\big)$ for all $s\in\mathbb{Z}_p$. Since $\mathscr{Y}$ contains no non-zero finite $\Gamma$-submodules (see \cite{NeuSchWin00}), we find $\mathscr{Y}^\Gamma=0$ and the order of $\mathscr{Y}(\chi)_\Gamma$ is $F(0)\stackrel{p}{=}L_p(1,\chi)$.
\end{proof}
\begin{cor}\label{ordine primi} If $\lambda=0$, then $\Pi_n$ is a cyclic group of order $|L_p(1,\chi)|_p^{-1}$ for every $n\gg 0$.
\end{cor}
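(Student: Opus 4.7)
Since $\Pi_n$ is already known to be cyclic (remark following Lemma \ref{cyc}), the plan is to determine its order by squeezing it between matching upper and lower bounds, both equal to $|L_p(1,\chi)|_p^{-1}$.

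For the upper bound, I will observe that any prime $\mathfrak{P}$ of $F_n$ above $p$ is totally ramified over $F$, hence the unique prime above its restriction to $F$; as a fractional ideal it is therefore $\Gamma$-invariant, and so $\Pi_n \subseteq A_n^{\Gamma}$. Under $\lambda=0$ and for $n$ sufficiently large, the surjective norm maps give a $\Gamma$-equivariant isomorphism $A_n \cong X$ (the $\Gamma$-action on both sides factoring through a common finite quotient), whence $|A_n^{\Gamma}| = |X^\Gamma|$. Proposition \ref{ordine fissi} then gives $|\Pi_n| \leq |X^\Gamma| = |L_p(1,\chi)|_p^{-1}$.

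For the lower bound the key tool will be the reformulation of Lemma \ref{primi} in the remark following it, namely $\beta(\hu{U_n}) = \pi(\Pi_n)$; since $\pi$ is defined on all of $\Pi_n$, this equality implies $|\beta(\hu{U_n})| \leq |\Pi_n|$, so it suffices to prove $|\beta(\hu{U_n})| \geq |L_p(1,\chi)|_p^{-1}$. I will first invoke Corollary \ref{cohom-splitting} to identify $\hu{U_n}$ with $\hu{U_n^1}$, and then use the short exact sequence $0 \to \langle\eta_n\rangle \to U_n^1 \to \B_n^1 \to 0$ together with $\hz{\langle\eta_n\rangle} = 0$ from Lemma \ref{cyc} to produce a surjection $\hu{U_n^1} \twoheadrightarrow \hu{\B_n^1}$; by Lemma \ref{ordine cohom} the target has order exactly $|L_p(1,\chi)|_p^{-1}$. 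To transfer this lower bound from $\hu{U_n}$ to $\beta(\hu{U_n})$, I plan to control $\ker(\beta)$ via the long cohomology sequence attached to the left column of the magic square (identifying it with the image of $\hu{\mathcal{O}_n^\times} \to \hu{U_n}$), together with Proposition \ref{ker-capit}, Greenberg's criterion for the vanishing of the capitulation map $\jmath$ in the regime $n \gg m \gg 0$, and Lemma \ref{crocerossa}.

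The hard step will be this last coordination: showing that $\ker(\beta)$ is absorbed by the kernel of the surjection $\hu{U_n^1} \twoheadrightarrow \hu{\B_n^1}$, so that $\beta$ induces an injection out of a group of order $|L_p(1,\chi)|_p^{-1}$. The vanishing $\hz{\langle\eta_n\rangle}=0$ is what allows the cyclotomic unit contribution to be killed cleanly, while the Greenberg--Gras vanishing of capitulation forces the remaining global unit contribution into $\ker(\beta)$; combining both then matches the upper and lower bounds and yields the claim.
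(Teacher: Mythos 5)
The paper's own proof is a two-line citation: Theorem~2 of Greenberg's thesis \cite{Gre76} gives the \emph{equality} $\Pi_n = X^\Gamma$ for $n\gg 0$ (indeed, it characterizes $\lambda=0$ by this equality), after which Proposition~\ref{ordine fissi} supplies the order and the remark in Section~\ref{SecCycUnits} supplies cyclicity. Your proposal does not use that theorem and instead tries to sandwich $|\Pi_n|$ between two bounds.

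Your upper bound is fine: $\Pi_n\subseteq A_n^{G_n}$ because the primes of $F_n$ above $p$ are totally ramified, hence $\Gamma$-fixed, and when $\lambda=0$ and $n\gg 0$ the norm maps identify $A_n$ with $X$ compatibly with the $\Gamma$-action, so $|A_n^{G_n}|=|X^\Gamma|=|L_p(1,\chi)|_p^{-1}$ by Proposition~\ref{ordine fissi}. This is essentially one direction of Greenberg's criterion and is sound.

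The lower bound is where the gap is, and it is a real one. You correctly reduce to showing $|\beta(\hu{U_n})|\geq |L_p(1,\chi)|_p^{-1}$, and the exact sequence $0\to\langle\eta_n\rangle\to U_n^1\to\B_n^1\to 0$ with $\hz{\langle\eta_n\rangle}=0$ does produce a surjection $\hu{U_n^1}\twoheadrightarrow\hu{\B_n^1}$, the latter of order $|L_p(1,\chi)|_p^{-1}$. But to push that bound through $\beta$ you need $\ker\beta\subseteq\ker\bigl(\hu{U_n^1}\twoheadrightarrow\hu{\B_n^1}\bigr)$. Note that $\ker\beta=\mathrm{Im}\bigl(\hu{\mathcal{O}_n^\times}\to\hu{U_n}\bigr)$ while the kernel of the surjection is $\mathrm{Im}\bigl(\hu{Cyc_n}\to\hu{U_n}\bigr)$, so the containment you need is exactly $\mathrm{Im}\bigl(\hu{\mathcal{O}_n^\times}\bigr)\subseteq\mathrm{Im}\bigl(\hu{Cyc_n}\bigr)$ inside $\hu{U_n}$, and since the reverse containment is automatic from $Cyc_n\subseteq\mathcal{O}_n^\times$, you would actually be proving an equality. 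Counting orders, this equality is equivalent to $|\mathrm{Im}\,\beta|=|L_p(1,\chi)|_p^{-1}$, i.e.\ to $|\Pi_n|=|L_p(1,\chi)|_p^{-1}$, which is precisely what you are trying to prove. The tools you list (Proposition~\ref{ker-capit}, vanishing of capitulation, Lemma~\ref{crocerossa}) give you the order of $\ker\bigl(\hu{\mathcal{O}_n^\times}\to\hu{U_n}\bigr)$ but not of its image, because $|\hu{\mathcal{O}_n^\times}|$ itself is not pinned down at this stage of the paper. You acknowledge this is ``the hard step'' and do not carry it out; as it stands the lower bound is unproven, and the natural way to close the gap is simply to invoke Greenberg's Theorem~2 as the paper does, which gives $\Pi_n=X^\Gamma$ outright rather than merely $\Pi_n\subseteq A_n^{G_n}$.
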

\begin{proof} Indeed, Theorem 2 in \cite{Gre76} shows that $\lambda=0$ if and only if $X^\Gamma=\Pi_n$. The result now follows from the proposition and from the remark of section \ref{SecCycUnits}.
\end{proof}
%%%%%%%%%%%%%%%SEZIONE 3%%%%%%%%%%%%%%%%%%%%%%%%%%%%%%
\section{Main result}\label{Main Result}
We are now in position of proving our main result. We stick to the notation introduced in Section \ref{SecCycUnits}. Let $n\geq 0$ and let $Q_n:=(\mathbb{U}_{F_n})(\chi)/\mathcal{O}_n^\times=Q_{F_n}(\chi)$ as in Section \ref{SecTate}: consider the exact sequence
$$
0\longrightarrow \mathcal{O}_n^\times\longrightarrow \mathbb{U}_{F_n}(\chi)\longrightarrow Q_n\longrightarrow 0\;.
$$
Since $Cyc_n\subseteq \mathcal{O}_n^\times$, it induces an exact sequence
$$
0\longrightarrow B_n\longrightarrow  \mathbb{U}_{F_n}(\chi)/Cyc_n\longrightarrow Q_n\longrightarrow 0\;,
$$
and the Tate cohomology of $\mathbb{U}_{F_n}(\chi)/Cyc_n$ coincides with that of $\B_n$, as discussed in Section \ref{SecTate}. For every $m\leq n$ the cyclicity of Tate cohomology for cyclic groups induces an exact square
\begin{equation}\label{hex}\xymatrix{
\hz{Q_n}\ar@{>}[0,1]^{\alpha_{[0,1]}}&\hu{B_n}\ar@{>}[1,0]\\
\hz{\B_n}\ar@{>}[-1,0]&\hu{\B_n}\ar@{>}[1,0]\\
\hz{B_n}\ar@{>}[-1,0]&\hu{Q_n}\ar@{>}[0,-1]^{\alpha_{[1,0]}}\;.
}\end{equation}
Pick now $q\in\mathbb{Z}$ and consider the exact sequence
\begin{equation}\label{Q=A}
0\longrightarrow Q_n\longrightarrow C_{F_n}(\chi)\longrightarrow A_n\longrightarrow 0\;:
\end{equation}
as the actions of $G_{n,m}$ and of $\Delta$ commute, we have $\hq{C_n(\chi)}\cong\hq{C_n}(\chi)$; global class field theory (see section $11.3$ of Tate's paper in \cite{CasFro86}) shows that $\hq{C_n}(\chi)\cong\hat{H}^{q+2}(G_{n,m},\mathbb{Z}(\chi))=0$ because we assumed $\chi\neq 1$. Therefore the long exact cohomology sequence of (\ref{Q=A}) induces isomorphisms
\begin{equation}\label{Q&A}
\hq{A_n}\cong \hat{H}^{q+1}(G_{n,m},Q_n)\quad\mathrm{\;for\;every\;} q\in\mathbb{Z}\;.
\end{equation}
\rmk Observe that our discussion never uses the assumption $\chi(p)=1$. Indeed, the maps $\alpha_{[0,1]}$ and $\alpha_{[1,0]}$ are defined whenever $\chi\neq 1$ and are indeed the same maps appearing in Proposition 2.6 of \cite{KraSch95}, where the case $\chi(p)\ne 1$ is treated. As discussed in the introduction, in that case they turned out to be isomorphism if $\lambda=0$ (see also \cite{BelNQD01}, \cite{Kuz96} and \cite{Oza97}). We are going to see this is not the case if $\chi(p)=1$.
%%%%%%%%%%%%%%%%TEOREMA%%%%%%%%%%%%%%%%%%%%%%%%%%%%%%
\begin{trm} \label{LHF} Assume $\lambda=0$ and $n\gg m\gg 0$. Then the kernels $\mathrm{Ker}(\alpha_{[0,1]})$, $\mathrm{Ker}(\alpha_{[1,0]})$ and the cokernels $\mathrm{Coker}(\alpha_{[0,1]})$, $\mathrm{Coker}(\alpha_{[1,0]})$ are cyclic groups of order $|L_p(1,\chi)|_p^{-1}$.
\end{trm}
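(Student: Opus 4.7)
My plan is to factor both $\alpha_{[0,1]}$ and $\alpha_{[1,0]}$ through the cohomology of $\mathcal{O}_n^\times$ using naturality, and then to leverage the decomposition $\B_n\cong\B_n^1\times\B_0$ from Proposition \ref{splitting} to identify each kernel and cokernel explicitly. First I pass to $\mathcal{U}_n:=U_n/\mathcal{O}_n^\times$, whose Tate cohomology coincides with that of $Q_n$, because the factors of $\mathbb{U}_{F_n}(\chi)$ away from primes above $p$ contribute trivially. Quotienting the short exact sequence $0\to\mathcal{O}_n^\times\to U_n\to\mathcal{U}_n\to 0$ by $Cyc_n$ yields, by functoriality, a factorization $\alpha_{[0,1]}=\pi_B\circ\partial_1$, where $\partial_1:\hz{\mathcal{U}_n}\to\hu{\mathcal{O}_n^\times}$ is the connecting map and $\pi_B:\hu{\mathcal{O}_n^\times}\to\hu{B_n}$ is induced by $\mathcal{O}_n^\times\twoheadrightarrow B_n$. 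Proposition \ref{ker-capit} together with Greenberg's criterion (the remark following it) shows that for $\lambda=0$ and $n\gg m\gg 0$ the map $\hu{\mathcal{O}_n^\times}\to\hu{U_n}$ has kernel equal to all of $A_m$; by exactness this kernel is $\mathrm{Im}(\partial_1)$, and since $|A_m|=|\hz{\mathcal{U}_n}|$ via (\ref{Q&A}) the map $\partial_1$ must be injective.

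Lemma \ref{crocerossa} identifies $\mathrm{Ker}(\pi_B)$ with $\hu{Cyc_n}$ injected into $\hu{\mathcal{O}_n^\times}$, so $\mathrm{Ker}(\alpha_{[0,1]})\cong\mathrm{Im}(\partial_1)\cap\hu{Cyc_n}=\mathrm{Ker}(\hu{Cyc_n}\to\hu{U_n})$. Reading the long exact sequence of $0\to Cyc_n\to U_n\to\B_n\to 0$, this last kernel is $\mathrm{Im}(\hz{\B_n}\to\hu{Cyc_n})$, whose own kernel equals $\mathrm{Im}(\hz{U_n}\to\hz{\B_n})$. The splittings in Proposition \ref{splitting} and Corollary \ref{cohom-splitting} turn $\hz{U_n}\to\hz{\B_n}$ into a product $(\hz{U_n^1}\to\hz{\B_n^1})\times(\hz{U_0}\to\hz{\B_0})$; the first is zero because $\hz{U_n^1}=0$ (Corollary \ref{cohom-splitting}), while the second is surjective because $\hu{\langle\eta_0\rangle}=0$ (Lemma \ref{cyc}). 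Hence $\mathrm{Im}(\hz{U_n}\to\hz{\B_n})=0\times\hz{\B_0}$, which has order $\ell:=|L_p(1,\chi)|_p^{-1}$ by Lemma \ref{ordine cohom}, and $\mathrm{Ker}(\alpha_{[0,1]})\cong\hz{\B_n}/(0\times\hz{\B_0})\cong\hz{\B_n^1}$ is cyclic of order $\ell$.

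Now the hexagon (\ref{hex}) is exact with the four arithmetic vertices of order $a:=|A_n|$ and the two $\B_n$-vertices of order $\ell^2$, forcing $|\mathrm{Ker}|=|\mathrm{Coker}|=\ell$ for both $\alpha$'s. Cyclicity of the remaining three groups then follows from parallel identifications: $\mathrm{Coker}(\alpha_{[1,0]})=\mathrm{Im}(\hz{B_n}\to\hz{\B_n})=\mathrm{Ker}(\hz{\B_n}\to\hz{\mathcal{U}_n})$, which equals the already computed $0\times\hz{\B_0}$ thanks to the injectivity of $\partial_1$; $\mathrm{Ker}(\alpha_{[1,0]})=\mathrm{Im}(\hu{\B_n}\to\hu{\mathcal{U}_n})$ contains $\mathrm{Im}(\beta)=\pi(\Pi_n)$ by Lemma \ref{primi} and the remark following it, coinciding with it by the order count and yielding a cyclic group isomorphic to $\Pi_n$ of order $\ell$ (Corollary \ref{ordine primi}; the injectivity of $\pi$ on $\chi$-components comes from $\hz{C_n}(\chi)=0$ since $\chi\neq 1$); finally, $\mathrm{Coker}(\alpha_{[0,1]})=\mathrm{Ker}(\hu{\B_n}\to\hu{\mathcal{U}_n})$ intersects $\hu{\B_n^1}$ trivially (because $\hu{U_n^1}\twoheadrightarrow\hu{\B_n^1}$ by the dual argument, and the composite $\hu{U_n}\to\hu{\mathcal{U}_n}$ equals $\beta$ with image of the full size $\ell$), so it projects isomorphically onto the cyclic group $\hu{\B_0}$.

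The hard part is the bookkeeping required to show that the arithmetic image of $B_n$ inside $\B_n$ sits, after passing to cohomology, inside the distinguished factor $0\times\hz{\B_0}$ of the splitting rather than in a generic subgroup; this hinges on the injectivity of $\partial_1$ coming from Greenberg's criterion, a point that distinguishes the $\chi(p)=1$ case from the non-split situation treated in the earlier literature.
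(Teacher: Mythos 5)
Your proof is correct and follows essentially the same route as the paper's: you use Proposition~\ref{ker-capit} together with Greenberg's criterion to get the key injection $\hz{Q_n}\hookrightarrow\hu{\mathcal{O}_n^\times}$ (the paper's~(\ref{injQ}), your injectivity of $\partial_1$); Lemma~\ref{crocerossa} to control the kernel at the $Cyc_n$ level; the splitting $\B_n\cong\B_n^1\times\B_0$ with Lemma~\ref{ordine cohom} to identify $\mathrm{Ker}(\alpha_{[0,1]})\cong\hz{\B_n^1}$; order bookkeeping in the exact hexagon; and Lemma~\ref{primi} plus Corollary~\ref{ordine primi} to pin down $\mathrm{Ker}(\alpha_{[1,0]})=\Pi_n$. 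The only organizational difference is that you make the factorization $\alpha_{[0,1]}=\pi_B\circ\partial_1$ and the semi-local model $\mathcal{U}_n=U_n/\mathcal{O}_n^\times$ of $Q_n$ explicit, and you track precisely which factor of the splitting each kernel and cokernel sits in; the paper instead chases the diagram~(\ref{ker}) and extracts $\mathrm{Ker}(\psi)$ from the short exact sequence~(\ref{01}), leaving the cyclicity of $\mathrm{Coker}(\alpha_{[0,1]})$ to a structure-of-$\hu{\B_n}$ argument rather than your explicit projection onto $\hu{\B_0}$. Both treatments rely on the same lemmas and produce the same identifications, so this is the same proof with slightly more explicit bookkeeping.
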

\begin{proof} We start by determining $\mathrm{Ker}(\alpha_{[0,1]})$. Choose $m$ big enough so that $|A_{m+k}|=|A_m|$ for all $k\geq 0$ and $n\geq m$ big enough so that $\hq{A_n}=A_n$. As in the remark of Section \ref{SecTate}, Proposition 2 of \cite{Gre76} shows that $\lambda=0$ implies $A_m=\mathrm{Ker}(\jmath_{m,n})$ if $n$ is sufficiently large. Combining Proposition \ref{ker-capit} with (\ref{Q&A}), this gives an injection
\begin{equation}\label{injQ}
\hz{Q_n}\hookrightarrow \hu{\mathcal{O}_n^\times}\;.
\end{equation}
Consider now the following commutative diagram, whose row and column are exact and where the injectivity of the vertical arrow in the middle follows from Lemma \ref{crocerossa}:
\begin{equation}\label{ker}\xymatrix{
&\hu{B_n}\\
\hz{Q_n}\ar@{^(->}[0,1]\ar@{>}[-1,1]^{\alpha_{[0,1]}}&\hu{\mathcal{O}_n^\times}\ar@{>}[-1,0]\ar@{>}[0,1]&\hu{U_n}\\
&\hu{Cyc_n}\ar@{^(->}[-1,0]\ar@{>}[-1,1]^\psi\\
}\end{equation}
An easy diagram chase shows that $\mathrm{Ker}(\alpha_{[0,1]})\cong\mathrm{Ker}(\psi)$. In order to study $\mathrm{Ker}(\psi)$, observe that $\psi$ appears in the sequence
\begin{equation}\label{01}
0\to \hz{\B_n^1}\to\hu{\langle\eta_n\rangle}\stackrel{\psi}{\to}\hu{U_n^1}\;,
\end{equation}
because $\hu{Cyc_n}=\hu{\langle\eta_n\rangle}$ by Lemma \ref{cyc}, while $\hu{U_n}=\hu{U^1_n}$ by Corollary \ref{cohom-splitting}. Morever, again by Corollary \ref{cohom-splitting}, $\hz{U_n^1}=0$ and (\ref{01}) is exact, thus giving
\begin{equation}\label{ker01}
\mathrm{Ker}(\alpha_{[0,1]})\cong\hz{\B_n^1}\cong \mathbb{Z}_p/L_p(1,\chi)\;,
\end{equation}
the last isomorphisms being Lemma \ref{ordine cohom}.\\
Having determined $\mathrm{Ker}(\alpha_{[0,1]})$, the exactness of (\ref{hex}) together with Corollary \ref{cohom-splitting} (and Lemma \ref{ordine cohom}) show immediately that
\begin{equation}\label{cok10}
\mathrm{Coker}(\alpha_{[1,0]})\cong\hz{\B_0}\cong\mathbb{Z}_p/L_p(1,\chi)\;.
\end{equation}
Since the orders of $A_n$ and $B_n$ coincide for $n\gg 0$, and since these groups are isomorphic to $\hq{Q_n}$ and $\hq{B_n}$ respectively (see (\ref{n>>m}) and (\ref{Q&A})), the equalities of orders
$$
|\mathrm{Ker}(\alpha_{[0,1]})|=|\mathrm{Coker}(\alpha_{[0,1]})|\quad\text{and}\quad|\mathrm{Ker}(\alpha_{[1,0]})|=|\mathrm{Coker}(\alpha_{[1,0]})|
$$
hold. By (\ref{ker01}) and (\ref{cok10}), the four groups have the same order, equal to $|L_p(1,\chi)|_p^{-1}$.\\
We are left with the structure of $\mathrm{Ker}(\alpha_{[1,0]})$ and $\mathrm{Coker}(\alpha_{[0,1]})$. The map $\alpha_{[1,0]}$ is the composition
\begin{equation}\label{inclusione?}\xymatrix{
\hu{Q_n}\ar@{>}[0,1]^{\tilde{\beta}}\ar@/_2pc/[rr]_{\alpha_{[1,0]}}&\hz{\mathcal{O}_n^\times}\ar@{>}[0,1]&\hz{B_n}
}\end{equation}
and $\mathrm{Ker}(\alpha_{[1,0]})\supseteq\mathrm{Ker}(\tilde{\beta})=\Pi_n$, the last identification coming from Lemma \ref{primi}. Combining Corollary \ref{ordine primi} with the computation of the order of $\mathrm{Ker}(\alpha_{[1,0]})$ performed above, the inclusion cannot be strict, and $\mathrm{Ker}(\alpha_{[1,0]})$ is cyclic of the prescribed order. Looking at $\mathrm{Ker}(\alpha_{[1,0]})$ and at $\mathrm{Coker}(\alpha_{[0,1]})$ as subgroups of $\hu{\B_n}$ as in (\ref{hex}), and knowing the structure of this last module by Lemma \ref{cohom-splitting}, shows that $\mathrm{Coker}(\alpha_{[0,1]})$ is cyclic, too.
\end{proof}
Now we can single out from the proof a precise description of the kernels of the maps $\alpha_{[1,0]}$ and $\alpha_{[0,1]}$ when seen as maps
$$
\alpha_{[i,j]}:A_n\rightarrow B_n
$$
by combining (\ref{n>>m}) and (\ref{Q&A}). Before stating the next result, observe that, by Lemma (\ref{cyc}), $\hu{Cyc_n}\cong \hu{\langle\eta_n\rangle}$, while (\ref{?}) and (\ref{??}) show that $\hu{\langle\eta_n\rangle}\cong \hu{I_{G_n}}\cong \hz{\mathbb{Z}_p}$. It is clear that these isomorphisms are not only $G_{n,m}$-linear, but also $G_n$-linear: therefore $\hu{Cyc_n}$ has trivial $G_n$-action.
\begin{cor}\label{Renefruit} With the same hypothesis as in the theorem,
$$
\mathrm{Ker}(\alpha_{[0,1]})=\mathrm{Ker}(\alpha_{[1,0]})=\Pi_n\;.
$$
\end{cor}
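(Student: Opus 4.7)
The strategy is to reduce the corollary to an order count. Theorem \ref{LHF} already shows that both $\mathrm{Ker}(\alpha_{[0,1]})$ and $\mathrm{Ker}(\alpha_{[1,0]})$ have order $|L_p(1,\chi)|_p^{-1}$, and Corollary \ref{ordine primi} gives the same order for $\Pi_n$. So it will suffice, for each kernel, to exhibit an inclusion inside $\Pi_n$ (regarded as a subgroup of $A_n$ via the $G_n$-equivariant identifications $A_n\cong \hu{Q_n}$ and $A_n\cong \hz{Q_n}$ coming from (\ref{Q&A}) combined with (\ref{n>>m})).

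For $\mathrm{Ker}(\alpha_{[1,0]})$ essentially nothing new is required: the theorem factors $\alpha_{[1,0]}$ as in (\ref{inclusione?}), so $\mathrm{Ker}(\alpha_{[1,0]})\supseteq \mathrm{Ker}(\tilde\beta)$, and Lemma \ref{primi}, in the form recorded in the remark of Section \ref{SecCycUnits}, identifies $\mathrm{Ker}(\tilde\beta)$ with $\Pi_n$ under $A_n\cong \hu{Q_n}$. The order count then forces equality.

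For $\mathrm{Ker}(\alpha_{[0,1]})$ I would invoke the remark just above the corollary, where $\hu{Cyc_n}$ was shown to have trivial $G_n$-action via the chain $\hu{Cyc_n}\cong \hu{\langle\eta_n\rangle}\cong \hu{I_{G_n}}\cong \hz{\mathbb{Z}_p}$. The whole diagram (\ref{ker}) is made of $G_n$-equivariant maps, and the chase there embeds $\mathrm{Ker}(\alpha_{[0,1]})$ as $\mathrm{Ker}(\psi)\subseteq \hu{Cyc_n}$. Equivariance of the chase, together with triviality of the $G_n$-action on $\hu{Cyc_n}$, forces $\mathrm{Ker}(\alpha_{[0,1]})\subseteq A_n^{G_n}$ under the $G_n$-equivariant isomorphism $A_n\cong \hz{Q_n}$.

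Finally, under $\lambda=0$ and for $n\gg 0$ the projection $X\to A_n$ is a $\Gamma$-equivariant isomorphism, so $A_n^{G_n}=X^\Gamma$; and Greenberg's criterion (invoked in Corollary \ref{ordine primi}) identifies $X^\Gamma$ with $\Pi_n$. Hence $\mathrm{Ker}(\alpha_{[0,1]})\subseteq \Pi_n$, and the order count closes the proof. The main obstacle here is really the $G_n$-equivariance bookkeeping: everything hinges on the $G_n$-linearity of the three isomorphisms making up $\hu{Cyc_n}\cong \hz{\mathbb{Z}_p}$, which the remark only asserts to be clear and which must be verified by tracing the $G_n$-action through the splitting (\ref{?}) defining $\eta_n$.
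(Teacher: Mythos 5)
Your proposal is correct and follows essentially the same route as the paper: use the theorem's order computation, get $\mathrm{Ker}(\alpha_{[1,0]})=\Pi_n$ directly from the factorization (\ref{inclusione?}) and Lemma \ref{primi}, and get $\mathrm{Ker}(\alpha_{[0,1]})\subseteq\Pi_n$ by placing it inside $\mathrm{Im}\big(\hu{Cyc_n}\big)\subseteq A_n^{G_n}$ (trivial $G_n$-action) and then invoking $\lambda=0\Rightarrow A_n^{G_n}=X^\Gamma=\Pi_n$. Your closing caveat about checking the $G_n$-linearity of the chain $\hu{Cyc_n}\cong\hu{\langle\eta_n\rangle}\cong\hu{I_{G_n}}\cong\hz{\mathbb{Z}_p}$ is precisely the point the paper glosses over with "it is clear," so you have correctly located where the work is.
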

\begin{proof}
While proving the theorem we found $\mathrm{Ker}(\alpha_{[1,0]})=\Pi_n$, and we now focus on $\mathrm{Ker}(\alpha_{[0,1]})$. We already know it is cyclic: looking again at (\ref{ker}) we find $\mathrm{Ker}(\alpha_{[0,1]})\subseteq\mathrm{Im}\big(\hu{Cyc_n}\big)\subseteq \hu{\mathcal{O}_n^\times}^{G_n}$: since the isomorphisms $\hz{Q_n}\cong A_n$ are $G_n$-linear, we get $\mathrm{Ker}(\alpha_{[0,1]})\subseteq A_n^{G_n}$. As in the proof of Corollary (\ref{ordine primi}), the assumption $\lambda=0$ is equivalent to $\Pi_n=X^\Gamma$ and $X^\Gamma=A_n^{G_n}$ if $n$ is big enough; putting all together, we have $\mathrm{Ker}(\alpha_{[0,1]})\subseteq \Pi_n$. Since they have the same order thanks to Corollary \ref{ordine primi} together with Theorem \ref{LHF}, the inclusion turns into an equality.
\end{proof}
\rmk As the above Corollary shows, there are indeed two maps $\alpha_{[0,1]}$ and $\alpha_{[1,0]}$ sitting in an exact sequence
$$
0\longrightarrow \Pi_n \longrightarrow A_n\stackrel{\alpha}{\rightarrow}B_n\longrightarrow \mathrm{\B_0}/\eta_0\longrightarrow 0\;,
$$
where $\alpha$ can be either of them. This is the same as in the non-split case, where both $\alpha_{[0,1]}$ and $\alpha_{[1,0]}$ give an isomorphism $A_n\cong B_n$ for $n\gg 0$ if $\lambda=0$ (see \cite{KraSch95}).
\bibliographystyle{amsalpha}
\bibliography{Bibliografia}
\end{document}